\newtheorem{thm}{Theorem}[section]
\newtheorem{cor}[thm]{Corollary}
\newtheorem{lem}[thm]{Lemma}
\newtheorem{prop}[thm]{Proposition}
\newtheorem{defn}[thm]{Definition}
\author{Lingjuan Shi \and Heping Zhang \thanks{Corresponding author.}}
\title{Tight upper bound on the maximum anti-forcing numbers of graphs \thanks{This work is fully supported by  National Natural Science Foundation of China (grant no. 11371180) and the Fundamental Research Funds for the Central Universities (grant nos. lzujbky-2017-ct01, lzujbky-2016-ct12).}}
\affiliation{School of Mathematics and Statistics, Lanzhou University, P.R. China}
\keywords{Maximum anti-forcing number, Perfect matching, Edge-involution, Cartesian product, Hypercube, Folded hypercube}
\begin{document}
\publicationdetails{19}{2017}{3}{9}{3267}
\maketitle
\begin{abstract}
Let $G$ be a simple graph with a perfect matching.
Deng and Zhang showed that the maximum anti-forcing number of $G$ is no more than the cyclomatic number. In this paper, we get a novel upper bound on the maximum anti-forcing number of $G$ and investigate the extremal graphs. If  $G$ has a  perfect matching $M$ whose anti-forcing number attains this upper bound, then we say $G$ is an extremal graph and $M$ is a {\em nice} perfect matching.
We obtain an equivalent condition for the nice perfect matchings of $G$ and establish a one-to-one correspondence between the nice perfect matchings and  the edge-involutions of $G$, which are the automorphisms $\alpha$ of order two such that $v$ and $\alpha(v)$ are adjacent for every vertex $v$.
We demonstrate that all extremal graphs can be constructed from $K_2$ by implementing two expansion operations, and $G$ is extremal if and only if one factor in a Cartesian decomposition of $G$ is extremal.
As examples, we have that all  perfect matchings of the complete graph $K_{2n}$ and the complete bipartite graph $K_{n, n}$ are nice.
Also we show that the hypercube $Q_n$, the  folded hypercube $FQ_n$ ($n\geq4$) and the  enhanced hypercube $Q_{n, k}$ ($0\leq k\leq n-4$) have exactly $n$, $n+1$ and $n+1$ nice perfect matchings respectively.

\end{abstract}
\section{Introduction}
Let $G$ be a finite and simple graph with vertex set $V(G)$ and edge set $E(G)$. We denote the number of vertices of $G$ by $v(G)$, and the number of edges by $e(G)$.
For $S\subseteq E(G)$, $G-S$  denotes the subgraph of $G$ with vertex set $V(G)$ and edge set $E(G)\setminus S$.
A \emph{perfect matching} of $G$ is a set $M$ of edges of  $G$ such that each vertex is incident with exactly one edge of $M$. A perfect matching of a graph coincides with a Kekul\'{e} structure in organic chemistry.

The \emph{innate degree of freedom} of a Kekul\'{e} structure was firstly proposed by \cite{D.J.Klein.1987} in the study of  resonance structure of a given molecule in chemistry. In general, \cite{F.Harary1991} called the innate degree of freedom as the forcing number of a perfect matching of a graph. The \emph{forcing number} of a perfect matching $M$ of a graph $G$ is the smallest cardinality of  subsets of $M$ not contained in other perfect matchings of $G$. The \emph{minimum forcing number} and \emph{maximum forcing number} of $G$ are the minimum and maximum values of forcing numbers over all perfect matchings of $G$, respectively. Computing the minimum forcing number of a bipartite graph with the maximum degree three is an $NP$-complete problem, see \cite{P.Afshani}.
As we know, the forcing numbers of perfect matchings have been studied for many specific graphs, see \cite{ PA, A_survay, XJ, XJ2016, F.Lam, LP1998, SZ2016, ZD2015, HZ, ZZL, ZZ}.

\cite{DV2007} defined the anti-forcing number of a graph as the smallest number of edges whose removal results in a subgraph with a unique perfect matching. Recently \cite{c'(M)} introduced the anti-forcing number of a single perfect matching $M$ of a graph $G$ as follows.  A subset $S\subseteq E(G)\setminus M$ is called an \emph{anti-forcing set of $M$} if $G-S$ has a unique perfect matching $M$. The \emph{anti-forcing number} of a perfect matching $M$ is the  smallest cardinality of anti-forcing sets of $M$, denoted by $af(G,M)$. Obviously, the anti-forcing number of $G$ is the minimum value of the anti-forcing numbers over all perfect matchings of $G$. The \emph{maximum anti-forcing number} of $G$ is the maximum value of the anti-forcing numbers over all perfect matchings of $G$, denoted by $Af(G)$.
It is an $NP$-complete problem to determine the anti-forcing number of a perfect matching of a bipartite graph with the
maximum degree four, see \cite{DK2015}. For some progress on this topic, see refs. \cite{VT, A_survay, HD2007, HD2008, DK2015, KDHZ,  KDHZ2017, c'(M), X.Li, SZ2016, QY2015, Q.Zhang2011}.

For a bipartite graph $G$, \cite{M.Riddle} proposed the trailing vertex method to get a lower bound on the forcing numbers of perfect matchings of $G$. Applying this lower bound, the minimum forcing number of some graphs have been obtained. In particular, \cite{M.Riddle} showed that the minimum forcing number of $Q_n$ is $2^{n-2}$ if $n$ is even. However, for odd $n$,  determining the minimum forcing number of $Q_n$ is still an open problem.
For the maximum forcing number of $Q_n$, Alon  proved that for sufficiently large $n$ this number is near to the total number of edges in a perfect matching of $Q_n$ (see \cite{M.Riddle}), but its specific value is still unknown.
Afterwards, \cite{PA} generalized  Alon's result to a $k$-regular bipartite graph and for a hexagonal system, a polyomino graph or a $(4, 6)$-fullerene, \cite{Xu2013, Zhou2016, SWZ2017} showed that its maximum forcing number equals its Clar number, respectivey.
For a graph $G$ with a perfect matching, \cite{c'(M)} connected the anti-forcing number and forcing number of a perfect matching of $G$, and showed that the maximum forcing number of $G$ is no more than $Af(G)$. Particularly, for a hexagonal system $H$, \cite{c'(M)} showed that $Af(H)$ equals the Fries number (see \cite{Fries}) of $H$. Recently, see \cite{SWZ2017},  we also showed that for a $(4, 6)$-fullerene graph $G$, $Af(G)$ equals the Fries number of $G$.

The \emph{cyclomatic number} of a connected graph $G$ is defined as  $r(G)=e(G)-v(G)+1$.
\cite{KDHZ2017} recently obtained that the maximum anti-forcing number of a graph is no more than the
cyclomatic number.
\begin{thm}[\cite{KDHZ2017}]
For  a connected graph $G$  with a perfect matching, $Af(G)\leq r(G)$.
\end{thm}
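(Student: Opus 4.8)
The plan is to prove the slightly stronger pointwise statement that $af(G,M)\le r(G)$ for \emph{every} perfect matching $M$ of $G$; since $Af(G)$ is the maximum of $af(G,M)$ over all perfect matchings, this immediately yields $Af(G)\le r(G)$. The starting point is the standard combinatorial description of anti-forcing sets: a set $S\subseteq E(G)\setminus M$ is an anti-forcing set of $M$ if and only if $G-S$ contains no $M$-alternating cycle. Indeed, if $G-S$ had a second perfect matching $M'$, then $M\triangle M'$ would be a nonempty disjoint union of $M$-alternating cycles, and conversely any $M$-alternating cycle $C$ yields the distinct perfect matching $M\triangle E(C)$. So it suffices to exhibit a set $S$ of at most $r(G)$ non-matching edges meeting every $M$-alternating cycle.

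To organize the alternating cycles I would contract each edge of $M$ to a single vertex, forming the (multi)graph $G'=G/M$. Because $M$ is a perfect matching, $G'$ has $v(G)/2$ vertices, its edges correspond bijectively to the non-matching edges $E(G)\setminus M$, and it remains connected. Contracting the forest $M$ alters neither $e-v$ nor the number of components, so the cyclomatic number is preserved: $r(G')=e(G')-v(G')+1=r(G)$. The key observation is that every $M$-alternating cycle of $G$ projects to a genuine cycle of $G'$: its matching edges are pairwise distinct and hence contract to distinct vertices of $G'$, while its non-matching edges join consecutive such vertices, tracing out a closed walk without repeated vertices (a length-$4$ alternating cycle simply yields a pair of parallel edges, which counts as a cycle in the multigraph).

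With this in hand the construction is immediate. Choose a spanning tree $T'$ of $G'$ and let $S$ be the set of non-matching edges corresponding to $E(G')\setminus E(T')$. Then $|S|=e(G')-(v(G')-1)=r(G')=r(G)$, and $G'-S=T'$ is acyclic. Since any surviving $M$-alternating cycle of $G-S$ would project to a cycle of $G'-S$, there can be none, so $S$ is an anti-forcing set of $M$ and $af(G,M)\le|S|=r(G)$.

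The routine bookkeeping (counting the vertices and edges of $G'$, and verifying $r(G')=r(G)$) is straightforward; the step that needs the most care is the projection claim, namely verifying that contraction sends $M$-alternating cycles to honest cycles rather than merely to closed walks, and that parallel edges and short cycles are handled correctly, so that destroying every cycle of $G'$ really does kill every $M$-alternating cycle of $G$. This correspondence is the heart of the argument, after which the cyclomatic number enters only through the elementary fact that deleting the complement of a spanning tree leaves an acyclic graph.
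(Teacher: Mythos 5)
Your argument is correct, but note that the paper does not prove this statement at all: Theorem 1.1 is quoted from \cite{KDHZ2017} as a known result, so there is no in-paper proof to compare against. Measured against the standard argument, your contraction route is sound but takes a detour. Every piece checks out: in a simple graph no edge of $E(G)\setminus M$ joins two vertices matched to each other, so contracting $M$ produces no loops; the matching edges of an $M$-alternating cycle are pairwise disjoint, so the cycle projects to a genuine cycle of the multigraph $G/M$ (a $4$-cycle giving a digon, which you correctly count as a cycle); and $r(G/M)=r(G)$ since contracting the spanning forest $M$ changes $e-v$ by nothing. However, the whole projection lemma --- the step you yourself flag as the delicate one --- can be bypassed. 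Your set $S$ is exactly $E(G)\setminus E(T)$, where $T$ is the spanning tree of $G$ obtained as $M$ together with the lifts of the edges of $T'$ (it has $\frac{v(G)}{2}+\bigl(\frac{v(G)}{2}-1\bigr)=v(G)-1$ edges and is connected because $T'$ spans $G/M$); spanning trees of $G/M$ correspond bijectively to spanning trees of $G$ containing $M$. Seen this way the proof is one line: extend the forest $M$ to a spanning tree $T$, put $S:=E(G)\setminus E(T)\subseteq E(G)\setminus M$, then $G-S=T$ is acyclic, so it has no $M$-alternating cycles whatsoever, $M$ is its unique perfect matching, and $af(G,M)\le |S|=e(G)-v(G)+1=r(G)$ for every $M$, whence $Af(G)\le r(G)$. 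So what your contraction buys is a clean way to see that exactly the cycle space of $G/M$ governs alternating cycles --- a genuinely useful picture elsewhere --- but for this bound it proves a stronger correspondence than the theorem needs, since destroying \emph{all} cycles of $G$ already suffices.
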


\cite{KDHZ2017} further showed that the connected graphs with the maximum anti-forcing number attaining the cyclomatic number  are a class of plane bipartite graphs.
In this paper,  we obtain a novel upper bound on the maximum anti-forcing numbers of a graph $G$  as follows.
\begin{thm}\label{bound}
Let $G$ be any simple graph with a perfect matching. Then for any perfect matching $M$ of $G$,
\begin{equation}\label{upper}af(G, M)\leq Af(G)\leq\frac{2e(G)-v(G)}{4}.\end{equation}
\end{thm}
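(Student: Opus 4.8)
The first inequality $af(G,M)\le Af(G)$ is immediate, since $Af(G)$ is by definition the maximum of $af(G,M')$ over all perfect matchings $M'$ and $M$ is one such. So the plan is to prove the second inequality, and I would first rewrite it in a more transparent form. Every perfect matching has exactly $v(G)/2$ edges, so the set $E(G)\setminus M$ of non-matching edges has cardinality $e(G)-v(G)/2$, and $\frac{2e(G)-v(G)}{4}=\tfrac12\,|E(G)\setminus M|$. Thus it suffices to produce, for each perfect matching $M$, an anti-forcing set $S\subseteq E(G)\setminus M$ with $|S|\le \tfrac12|E(G)\setminus M|$; equivalently, to keep a set $K:=(E(G)\setminus M)\setminus S$ of at least half the non-matching edges for which $(V(G),M\cup K)$ still has $M$ as its unique perfect matching. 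Here I would invoke the standard characterization (\cite{c'(M)}) that a set $S\subseteq E(G)\setminus M$ is an anti-forcing set of $M$ if and only if $G-S$ contains no $M$-alternating cycle; hence the task becomes choosing a large $K$ so that $M\cup K$ has no $M$-alternating cycle.

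To build such a $K$, I would fix an arbitrary linear order $e_1<\dots<e_n$ on the edges of $M$ (so $n=v(G)/2$) and, for each matching edge $e_i=x_iy_i$, single out one of its two endpoints as a \emph{keep-port} $p_i$. Every non-matching edge $f$ joins an endpoint of some $e_i$ to an endpoint of some $e_j$ with $i\ne j$; say $e_j$ is the one of larger index. I would put $f$ into $K$ exactly when the endpoint of $f$ lying on $e_j$ is the keep-port $p_j$. The key structural claim is then that $M\cup K$ has no $M$-alternating cycle. To see this, suppose $C$ were such a cycle and let $e_j$ be its matching edge of largest index. The cycle traverses $e_j$, so the two non-matching edges of $C$ incident with $e_j$ meet it at its two \emph{different} endpoints; both join $e_j$ to lower-indexed matching edges of $C$, so for each of them $e_j$ is the larger-index edge. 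Membership in $K$ would force both to meet $e_j$ at the single keep-port $p_j$, which is impossible since they meet $e_j$ at distinct endpoints. This contradiction proves the claim.

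It remains to show that some choice of keep-ports yields $|K|\ge\tfrac12|E(G)\setminus M|$. Here I would use a simple averaging argument over all $2^{n}$ assignments of keep-ports: a fixed non-matching edge $f$, meeting its larger-index matching edge $e_j$ at a fixed endpoint, lies in $K$ precisely for those assignments in which $p_j$ equals that endpoint, i.e.\ for exactly half of all assignments. Summing $|K|$ over all $2^n$ assignments therefore gives $2^{n-1}|E(G)\setminus M|$, so the average value of $|K|$ is exactly $\tfrac12|E(G)\setminus M|$ and some assignment attains at least this value. The corresponding $S=(E(G)\setminus M)\setminus K$ is an anti-forcing set with $|S|\le\tfrac12|E(G)\setminus M|=\frac{2e(G)-v(G)}{4}$, and since this bound is independent of $M$ we may take the maximum over all perfect matchings to conclude $Af(G)\le\frac{2e(G)-v(G)}{4}$.

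The one genuinely delicate point---the main obstacle---is the structural claim that the constructed $K$ is free of $M$-alternating cycles; everything hinges on the observation that at the largest-index matching edge of a putative alternating cycle the two flanking non-matching edges are forced to distinct ports, which cannot both be the keep-port. Once this ``largest index'' argument is in place, the reduction and the averaging count are routine. I would also expect the later extremal analysis (when equality holds, as for $K_{2n}$) to amount to tracing exactly when the structural and averaging inequalities above are simultaneously tight, which is presumably where the notions of \emph{nice} perfect matching and edge-involution come into play.
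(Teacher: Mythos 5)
Your proof is correct, but it takes a genuinely different route from the paper's. The paper also begins by fixing one endpoint per matching edge (a set $A$, playing exactly the role of your keep-ports), but then passes to the bipartite subgraph $G':=G(A,\bar{A})$, takes a \emph{minimum} anti-forcing set $S$ of $M$ in $G'$, and invokes Lemma \ref{complementary set} --- the complement $E(G')\setminus(M\cup S)$ of a minimal anti-forcing set in a bipartite graph is again an anti-forcing set, proved by splicing together the alternating cycles witnessing minimality into a closed alternating walk and using bipartiteness to extract an alternating cycle --- to obtain two anti-forcing sets $S\cup E(A)$ and $S^{\ast}\cup E(\bar{A})$ of $M$ in $G$ whose sizes sum to $e(G)-\frac{v(G)}{2}$. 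You instead construct an anti-forcing set directly: your largest-index argument (the two non-matching edges of an alternating cycle flanking its top matching edge $e_j$ attach at the two \emph{distinct} endpoints of $e_j$, so they cannot both be admitted through the single keep-port $p_j$) is sound, and together with the characterization in Lemma \ref{antiforcing set lem} it makes the proof self-contained, with no bipartite reduction, no minimality, and no complementation lemma. Two remarks. First, your averaging over all $2^{|M|}$ port assignments is more than you need: the complementary assignment $\bar{p}$ satisfies $K_{\bar{p}}=(E(G)\setminus M)\setminus K_{p}$, so for any single assignment the sets $K_p$ and $S_p$ are \emph{both} anti-forcing and partition $E(G)\setminus M$, giving $af(G,M)\le\min\{|K_p|,|S_p|\}\le\frac{1}{2}|E(G)\setminus M|$ directly --- which recovers exactly the paper's two-complementary-sets structure. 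Second, your construction is not foreign to the paper: in the necessity direction of Theorem \ref{sufficiency and necessary} the authors introduce precisely such ordering-based sets $E_A^\omega\cup E(A)$ and $E_{\bar{A}}^\omega\cup E(\bar{A})$ and assert without proof that they are anti-forcing sets; your largest-index lemma is exactly the missing verification there. As for what each approach buys: the paper's detour yields Lemma \ref{complementary set} as a reusable statement about minimal anti-forcing sets in bipartite graphs, while your argument is more elementary, exhibits explicit anti-forcing sets of size at most $\frac{2e(G)-v(G)}{4}$, and could in fact be used to streamline both this proof and the later one.
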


In fact, this upper bound is also tight. By a simple comparison  we immediately get that the upper bound   is better than the previous  upper bound $r(G)$  when $3v(G)<2e(G)+4$. In next sections we shall see that  many non-planar graphs can attain this upper bound, such as complete graphs $K_{2n}$, complete bipartite graphs $K_{n, n}$, hypercubes $Q_n$, etc.

We say that a graph $G$ is {\em extremal} if the maximum anti-forcing number $Af(G)$ attains the upper bound in Theorem \ref{bound}, that is, $G$ has  a perfect matching $M$ such that both equalities in (\ref{upper}) hold. Such $M$ is said to be a \emph{nice} perfect matching of $G$.
In Section $2$, we  give a proof to  Theorem \ref{bound},  obtain an equivalent condition for the nice perfect matchings of $G$, and establish a one-to-one correspondence between the nice perfect matchings of $G$ and the edge-involutions of $G$.
In Section $3$, we provide a construction of all extremal graphs, which  can be obtained from $K_2$ by implementing two expansion operations, and show that such a graph is an elementary graph (each edge belongs to some perfect matching).
In Section $4$, we investigate  Cartesian decompositions of  an extremal graph. Let $\Phi^\ast(G)$ denote the number of nice perfect matchings of a graph $G$.  For a Cartesian decomposition $G=G_1\Box\cdots\Box G_k$, we obtain  $\Phi^\ast(G)=\sum_{i=1}^{k}\Phi^\ast(G_i)$. This implies that a graph $G$ is extremal if and only if in a Cartesian decomposition of $G$ one factor  is an extremal graph. As applications we show that three cube-like graphs, the
hypercubes $Q_n$, the folded hypercubes $FQ_n$  and the  enhanced hypercubes $Q_{n, k}$ are extremal.  In particular, in the final section we prove that $Q_n$ has exactly $n$ nice perfect matchings and $Af(Q_n)=(n-1)2^{n-2}$,  $FQ_n$ ($n\geq4$) has exactly $n+1$ nice perfect matchings and $Af(FQ_n)=n2^{n-2}$, and for $0\leq k\leq n-4$, $Q_{n, k}$ has $n+1$ nice perfect matchings  and $Af(Q_{n, k})=n2^{n-2}$. We also show that $FQ_n$ is a prime graph under the Cartesian decomposition.
\section{Upper bound and nice perfect matchings}
\subsection{The proof of Theorem \ref{bound}}
Let $G$ be a graph with a perfect matching $M$. A cycle of $G$ is called an \emph{$M$-alternating cycle} if its edges appear alternately in $M$ and $E(G)\setminus M$. If $G$ has not $M$-alternating cycles, then $M$ is a unique perfect matching since the symmetric difference of two distinct perfect matchings is the union of some $M$-alternating cycles.
So $M$ is a unique perfect matching of $G$ if and only if $G$ has no $M$-alternating cycles. Lei et al. obtained the following characterization for an anti-forcing set of a perfect matching.
\begin{lem}[\cite{c'(M)}]\label{antiforcing set lem}
A set $S\subseteq E(G)\setminus M$ is an anti-forcing set of $M$ if and only if
$S$ contains at least one edge of every $M$-alternating cycle of $G$.
\end{lem}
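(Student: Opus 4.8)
The plan is to reduce the biconditional to the criterion already recorded just above the lemma: a graph $H$ containing $M$ has $M$ as its unique perfect matching if and only if $H$ has no $M$-alternating cycle. Since the hypothesis $S\subseteq E(G)\setminus M$ guarantees $M\subseteq E(G-S)$, the set $M$ is always a perfect matching of $G-S$, so by that criterion $S$ is an anti-forcing set of $M$ exactly when $G-S$ possesses no $M$-alternating cycle. Thus the whole lemma amounts to the equivalence: $G-S$ has no $M$-alternating cycle if and only if $S$ meets every $M$-alternating cycle of $G$.

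I would prove this equivalence by its contrapositive on each side, which makes both directions symmetric. The key observation is that passing to the subgraph $G-S$ neither creates nor destroys the $M$-alternating structure of a cycle: a cycle $C$ all of whose edges lie in $G-S$ is an $M$-alternating cycle of $G-S$ if and only if it is one of $G$, because the alternation is determined by the fixed partition of $E(G)$ into $M$ and $E(G)\setminus M$, which $G-S$ inherits.

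For the forward direction, assuming $S$ is an anti-forcing set so that $G-S$ has no $M$-alternating cycle, I would suppose for contradiction that some $M$-alternating cycle $C$ of $G$ avoided $S$; then every edge of $C$ survives in $G-S$, so $C$ would be an $M$-alternating cycle of $G-S$, a contradiction. For the converse, assuming $S$ meets every $M$-alternating cycle, I would suppose $G-S$ contained an $M$-alternating cycle $C$; then $C$ is also an $M$-alternating cycle of $G$, yet all its edges lie outside $S$ (being edges of $G-S$), contradicting that $S$ meets $C$. Here one uses that any edge of $C$ lying in $S$ is necessarily a non-matching edge, since $S\cap M=\varnothing$, but this causes no difficulty.

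There is no serious obstacle in this argument: once the symmetric-difference criterion for uniqueness is in hand, the proof is a short bookkeeping of which cycles survive the deletion of $S$. The only point requiring care is to keep track that $S$ contains no edge of $M$, so that deleting $S$ leaves $M$ intact as a perfect matching and does not disturb the $M$ versus non-$M$ labelling of any surviving edge.
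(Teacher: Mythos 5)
Your proof is correct: the paper itself cites this lemma from \cite{c'(M)} without proof, and the argument you give is exactly the one implicit in the paper's preceding remark, namely that $M$ is the unique perfect matching of a graph containing it if and only if that graph has no $M$-alternating cycle (via symmetric differences), combined with the routine observation that $M$-alternating cycles of $G-S$ are precisely the $M$-alternating cycles of $G$ avoiding $S$. Your care in noting that $S\cap M=\varnothing$ keeps $M$ intact and preserves the alternation labelling is exactly the right bookkeeping; nothing is missing.
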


 A \emph{compatible $M$-alternating set} of $G$ is a set of $M$-alternating cycles such that any two members are either disjoint or intersect only at edges in $M$. Let $c'(M)$ denote the maximum cardinality of compatible $M$-alternating sets of $G$. By Lemma \ref{antiforcing set lem}, the authors obtained the following theorem.
\begin{thm}[\cite{c'(M)}]\label{c'(M)}
For any perfect matching $M$ of $G$, we have $af(G, M)\geq c'(M)$.
\end{thm}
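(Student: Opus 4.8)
The plan is to prove the inequality by a counting (in fact injection) argument: I will show that any anti-forcing set of $M$ must contain a distinct edge for each cycle in a maximum compatible $M$-alternating set. First I would fix a compatible $M$-alternating set $\mathcal{A}=\{C_1,\dots,C_k\}$ attaining the maximum, so that $k=c'(M)$, and let $S$ be an \emph{arbitrary} anti-forcing set of $M$. Since $af(G,M)$ is defined as the minimum size of such sets, it suffices to prove $|S|\geq k$ for every choice of $S$.

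By Lemma \ref{antiforcing set lem}, $S$ contains at least one edge of every $M$-alternating cycle of $G$, so for each $i$ I may select an edge $e_i\in S\cap E(C_i)$. The key observation is that $e_i$ is forced to be a non-matching edge of $C_i$: by definition an anti-forcing set satisfies $S\subseteq E(G)\setminus M$, hence $e_i\in E(C_i)\setminus M$. I then claim the assignment $i\mapsto e_i$ is injective. This is exactly where the compatibility hypothesis is used: any two distinct members $C_i,C_j$ of $\mathcal{A}$ are either disjoint or intersect only in edges belonging to $M$, so any edge common to $C_i$ and $C_j$ lies in $M$. Since each $e_i$ is a non-matching edge, it cannot lie on any cycle other than $C_i$, and therefore $e_i\neq e_j$ whenever $i\neq j$. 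Thus $e_1,\dots,e_k$ are $k$ pairwise distinct edges of $S$, whence $|S|\geq k=c'(M)$, and minimizing over all anti-forcing sets $S$ gives $af(G,M)\geq c'(M)$.

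The one point requiring care, and the crux of the whole argument, is the injectivity step. It hinges on combining two facts that individually look innocuous: an anti-forcing set avoids the matching $M$, while the compatibility condition confines all shared edges of the cycles to $M$. Together these make the selected edge $e_i$ private to its cycle $C_i$, which is precisely what upgrades the mere ``$S$ meets each $C_i$'' from Lemma \ref{antiforcing set lem} into ``$S$ contains $k$ distinct edges.'' Once this is seen, no further computation is needed; everything else is a direct unwinding of the definitions of anti-forcing set, compatible $M$-alternating set, and $c'(M)$.
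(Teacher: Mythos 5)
Your proof is correct and takes essentially the same route the paper intends: Theorem \ref{c'(M)} is quoted from \cite{c'(M)} and derived, exactly as you do, from Lemma \ref{antiforcing set lem} combined with the observation that anti-forcing edges avoid $M$ while compatible cycles share only $M$-edges, so each cycle of a maximum compatible family contributes its own distinct edge of any anti-forcing set. The injectivity step you flag as the crux is indeed the whole content of the argument, and you have handled it properly.
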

\begin{figure}[htbp!]
\centering
\includegraphics[height=2.3cm]{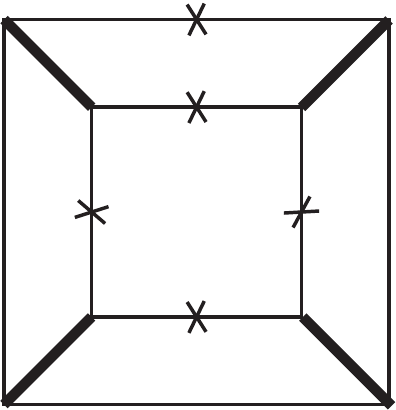}
\caption{\label{Fig.1}{\small A perfect matching $M$ of $Q_3$ (thick edges) and an anti-forcing set $S$ of $M$ (``$\times$").}}
\end{figure}

In general, for any anti-forcing set $S$ of a perfect matching $M$ of $G$, the edge set $E(G)\setminus(M\cup S)$ may not  be an anti-forcing set of $M$ (see Fig. \ref{Fig.1}). However, for any minimal anti-forcing set in a bipartite graph, we have Lemma \ref{complementary set}. Here an anti-forcing set is \emph{minimal} if its any proper subset is not an anti-forcing set. Recall that for an edge subset $E$ of a graph $G$, $G[E]$ is an edge induced subgraph of $G$ with vertex set being the vertices incident with some edge of $E$ and edge set being $E$.
\begin{lem}\label{complementary set}
Let $G$ be a simple bipartite graph with a perfect matching $M$, and $S$ a minimal anti-forcing set of $M$. Then $S^{\ast}:=E(G)\setminus(M\cup S)$ is an anti-forcing set of $M$.
\end{lem}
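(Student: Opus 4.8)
The plan is to translate the anti-forcing condition into a statement about acyclicity of a digraph, exploiting bipartiteness. Fix the bipartition $V(G)=X\cup Y$ and orient every edge of $M$ from $X$ to $Y$ and every edge of $E(G)\setminus M$ from $Y$ to $X$; call the resulting digraph $\vec G$. Under this orientation a subgraph is an $M$-alternating cycle if and only if it is a directed cycle, since at an $X$-vertex the only out-arc is its matching edge and at a $Y$-vertex the only in-arc is its matching edge. By Lemma \ref{antiforcing set lem}, a set $T\subseteq E(G)\setminus M$ is an anti-forcing set of $M$ exactly when $\vec G-T$ contains no directed cycle. In particular, since $S$ is an anti-forcing set, $\vec G-S$ is acyclic, so it admits a topological order, i.e.\ an injective $p\colon V(G)\to\mathbb{R}$ with $p(u)<p(v)$ along every arc of $\vec G-S$.

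First I would argue by contradiction: if $S^{\ast}$ were not an anti-forcing set, then by Lemma \ref{antiforcing set lem} some $M$-alternating (directed) cycle $C$ has all its non-matching arcs in $S$. Write these arcs in cyclic order as $f_1,\dots,f_k$ with $f_j\colon y_j\to x_j$; the matching arc following $f_j$ on $C$ is $x_j\to m(x_j)$, where $m(x_j)$ is the $M$-partner of $x_j$, and comparing with the tail of $f_{j+1}$ forces $m(x_j)=y_{j+1}$ (indices mod $k$). Since $C$ is a cycle, the vertices $y_1,\dots,y_k$ are pairwise distinct.

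Next I would invoke minimality of $S$. For each $j$, the set $S\setminus\{f_j\}$ is not an anti-forcing set, so $\vec G-(S\setminus\{f_j\})$ has a directed cycle; as it differs from the acyclic $\vec G-S$ only by the arc $f_j$, this cycle passes through $f_j$. Deleting $f_j$ from it leaves a directed path $Q_j$ in $\vec G-S$ from $x_j$ to $y_j$. Because the unique out-arc of $x_j$ in $\vec G$ is its matching arc, the second vertex of $Q_j$ is $m(x_j)=y_{j+1}$; hence $Q_j$ is a directed path in the acyclic digraph $\vec G-S$ that visits $y_{j+1}$ strictly before its endpoint $y_j$, giving $p(y_{j+1})<p(y_j)$.

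The contradiction then comes for free: running $j$ around the cycle yields $p(y_1)>p(y_2)>\cdots>p(y_k)>p(y_1)$, which is impossible, so no such $C$ exists and $S^{\ast}$ is an anti-forcing set. I expect the main obstacle to be the correct setup rather than the estimates: one must spot that the bipartite orientation converts alternating cycles into directed cycles and anti-forcing sets into feedback arc sets, and then recognize that minimality supplies, for each $f_j$, a detour path whose forced first step lands on the matching partner $m(x_j)=y_{j+1}$; this is exactly what produces the cyclic chain of strict inequalities. A minor point to check carefully is the strictness $p(y_{j+1})<p(y_j)$, which relies on the $y_j$ being distinct and on $p$ being a genuine (injective) linear extension.
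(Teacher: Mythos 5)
Your proof is correct, and it reaches the contradiction by a genuinely different mechanism than the paper, even though both arguments share the same skeleton: assume an $M$-alternating cycle $C$ survives in $G-S^{\ast}$, so that its non-matching edges $f_1,\dots,f_k$ all lie in $S$, and use minimality of $S$ to obtain for each $f_j$ an $M$-alternating cycle $C_j$ meeting $S$ exactly in $f_j$. The paper finishes by forming the union of the paths $C_j-f_j$, asserting (with a figure rather than a detailed argument) that this union is a closed $M$-alternating walk in $G-S$, and then extracting from that walk an $M$-alternating cycle using bipartiteness, contradicting that $S$ is anti-forcing. You instead translate everything into digraph language: matching edges oriented $X\to Y$ and the rest $Y\to X$, so that $M$-alternating cycles become directed cycles and anti-forcing sets become feedback arc sets; acyclicity of $\vec G-S$ supplies a topological order $p$, and the forced first step $x_j\to m(x_j)=y_{j+1}$ of each detour path $Q_j=C_j-f_j$ yields $p(y_{j+1})<p(y_j)$, whence the impossible cyclic chain $p(y_1)>p(y_2)>\cdots>p(y_k)>p(y_1)$. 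What your version buys is rigor at precisely the paper's two soft spots: the gluing of the paths $C_j-f_j$ into a single closed alternating walk (delicate, since in the directed picture each $Q_j$ runs from the head $x_j$ of $f_j$ back to its tail $y_j$, so these paths cannot simply be substituted for the $f_j$ along $C$), and the walk-to-cycle extraction, which is exactly where bipartiteness enters the paper's proof and which fails in non-bipartite graphs; in your proof bipartiteness is used once, transparently, to define the orientation. The one point you leave implicit is $k\geq 2$, which is needed so that $y_{j+1}\neq y_j$ in the strict inequality (you correctly flag the distinctness issue); it is immediate because in a simple graph an $M$-alternating cycle has length at least four, hence contains at least two non-matching edges. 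In exchange, the paper's route is shorter and more pictorial, while yours is fully self-contained.
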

\begin{proof}
\begin{figure}[htbp!]
\centering
\includegraphics[height=3.5cm]{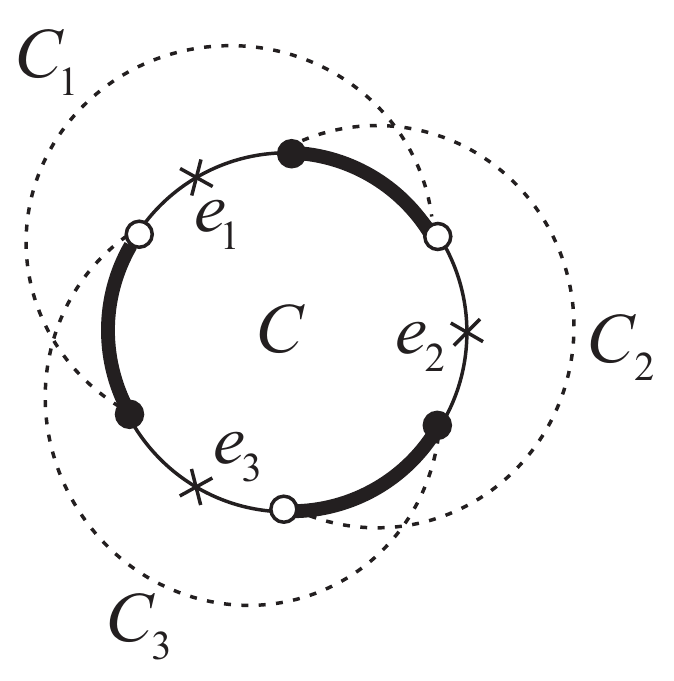}
\caption{\label{Fig.2}{\small Example of $k=3$.}}
\end{figure}
Clearly, $M$ is a perfect matching of $G[M\cup S]$.
It is sufficient to show that $G[M\cup S]$ has no $M$-alternating cycle by Lemma \ref{antiforcing set lem}. By the contrary, we suppose that $C$ is an $M$-alternating cycle of $G[M\cup S]$.
Then the edges of $C$ appear alternately in $M$ and $S$.
Let $E(C)\cap S=\{e_1, e_2, \ldots, e_k\}$ (see Fig. \ref{Fig.2} for $k=3$). Since $S$ is a minimal anti-forcing set of $M$ in $G$, the subgraph $G-(S\setminus \{e_i\})$ has an $M$-alternating cycle $C_i$ such that $E(C_i)\cap S=\{e_i\}$, $i=1, 2, \ldots, k$. Then $G-S$ has a closed $M$-alternating walk $W=G[\bigcup\limits_{i=1}^{k}(E(C_i)\setminus \{e_i\})$ as depicted in Fig. \ref{Fig.2}. Since $G$ is a bipartite graph, $W$ contains an $M$-alternating cycle $C'$. So $G-S$ has an $M$-alternating cycle $C'$. This implies that $S$ is not an anti-forcing set of $M$, a contradiction. So $S^{\ast}$ is an anti-forcing set of $M$.
\end{proof}

Let $X$ and $Y$ be two vertex subsets of a graph $G$. We denote by $E(X, Y)$ the set of edges of $G$ with one end in $X$ and the other end in $Y$. The subgraph induced by $E(X, Y)$, for convenience,  is denoted by $G(X, Y)$.
For a vertex subset $X$ of $G$, $G[X]$ is a vertex induced subgraph of $G$ with vertex set $X$ and any two vertices are adjacent if and only if they are adjacent in $G$. The edge set of $G[X]$ is denoted by $E(X)$.
\\

\noindent{\em Proof of Theorem \emph{\ref{bound}}.}
For any perfect matching $M$ of $G$, let $A$ be a vertex subset of $G$ consisting of one end vertex for each edge of $M$, $\bar{A}:=V(G)\setminus A$. Then $G':=G(A, \bar{A})$ is a bipartite graph and $M$ is a perfect matching of $G'$. Let $S$ be a minimum anti-forcing set of $M$ in $G'$.
By Lemma \ref{complementary set}, $S^{\ast}:=E(G')\setminus(M\cup S)$ is an anti-forcing set of $M$ in $G'$. So both $S\cup E(A)$ and $S^{\ast}\cup E(\bar{A})$ are anti-forcing sets of $M$ in $G$. Hence
$$2af(G, M)\leq|S\cup E(A)|+|S^{\ast}\cup E(\bar{A})|=e(G)-|M|=e(G)-\frac{v(G)}{2}.$$
Then $af(G, M)\leq \frac{2e(G)-v(G)}{4}$. By the arbitrariness of $M$, $Af(G)\leq \frac{2e(G)-v(G)}{4}$. \hfill$\Box$
\\

For any perfect matching $M$ of a complete bipartite graph $K_{m, m}$ ($m\geq2$), any two edges of $M$ belong to an $M$-alternating $4$-cycle. Since any two distinct $M$-alternating $4$-cycles are compatible,  $c'(M)\geq\binom{m}{2}=\frac{m^2-m}{2}$. By Theorems \ref{c'(M)} and \ref{bound}, we obtain $af(K_{m,m}, M)=\frac{m^2-m}{2}=Af(K_{m,m})$.
Let $M'$ be any perfect matching of a complete graph $K_{2n}$. For any two edges $e_1$ and $e_2$ of $M'$, there are two distinct $M'$-alternating $4$-cycles each of which simultaneously contains edges $e_1$ and $e_2$.
So $af(K_{2n}, M')\geq c'(M')\geq \binom{n}{2}\times2=n^2-n$.  By Theorem \ref{bound}, we know that $af(K_{2n}, M')=Af(K_{2n})=n^2-n$. Hence every perfect matching of $K_{m, m}$ and $K_{2n}$ is nice.

Recall that the \emph{$n$-dimensional hypercube} $Q_n$ is the graph with vertex set being the set of all $0$-$1$ sequences of length $n$ and two vertices are adjacent if and only if they differ in exactly one position. For $x\in \{0, 1\}$, set $\bar{x}:=1-x$. The edge connecting the two vertices $x_1\cdots x_{i-1}x_ix_{i+1}\cdots x_n$ and $x_1\cdots x_{i-1}\bar{x}_ix_{i+1}\cdots x_n$ of $Q_n$ is called an \emph{$i$-edge} of $Q_n$. We denote by $E_i$ the set of all the $i$-edges of $Q_n$, $i=1, 2, \ldots, n$.
In fact, $E_i$ is a $\Theta_{Q_n}$-class of $Q_n$.
We can show the following result for $Q_n$.
\begin{lem}\label{Af(Q_n)}
$\Theta_{Q_n}$-class $E_i$ of $Q_n$ is a nice perfect matching, that is, $af(Q_n, E_i)=Af(Q_n)=(n-1)2^{n-2}$.
\end{lem}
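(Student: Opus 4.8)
The plan is to follow the same strategy used above for $K_{m,m}$ and $K_{2n}$: bound $af(Q_n, E_i)$ below by the size of a compatible $E_i$-alternating set and observe that this lower bound already meets the upper bound of Theorem \ref{bound}. First I would record the parameters $v(Q_n)=2^n$ and $e(Q_n)=n2^{n-1}$, so that the right-hand side of (\ref{upper}) evaluates to $\frac{2\cdot n2^{n-1}-2^n}{4}=(n-1)2^{n-2}$. Thus Theorem \ref{bound} already gives $af(Q_n,E_i)\le Af(Q_n)\le(n-1)2^{n-2}$, and by Theorem \ref{c'(M)} it suffices to exhibit a compatible $E_i$-alternating set of $Q_n$ with exactly $(n-1)2^{n-2}$ members; any such family forces equality throughout the chain $af(Q_n,E_i)\ge c'(E_i)\ge(n-1)2^{n-2}\ge Af(Q_n)\ge af(Q_n,E_i)$.

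Next I would describe the family explicitly. For each coordinate $j\ne i$ and each assignment of the remaining $n-2$ coordinates, the two coordinates $i,j$ span a $4$-cycle $C$ on the four vertices obtained by toggling positions $i$ and $j$. Its two $i$-edges lie in $E_i$ and its two $j$-edges lie outside $E_i$, so $C$ is $E_i$-alternating. Counting the $n-1$ choices of $j$ together with the $2^{n-2}$ ways to fix the other coordinates yields precisely $(n-1)2^{n-2}$ such squares, which is exactly the number needed; note that longer $E_i$-alternating cycles are irrelevant here, since the $4$-cycles already saturate the bound.

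The crux is to verify that this whole family is compatible, i.e.\ that any two of these squares are disjoint or meet only in edges of $E_i$. Two squares sharing the same pair $\{i,j\}$ differ in some fixed coordinate, hence are vertex-disjoint. For two squares with pairs $\{i,j\}$ and $\{i,k\}$, $j\ne k$, the only edges outside $E_i$ are a $j$-edge in the first and a $k$-edge in the second; since $E_j$ and $E_k$ are distinct $\Theta_{Q_n}$-classes these can never coincide, so the two squares can share only $i$-edges, all of which belong to $M=E_i$. I expect this compatibility check to be the main (and essentially the only) obstacle, and it is exactly here that the product structure of $Q_n$ --- the fact that the dimension classes $E_1,\dots,E_n$ partition $E(Q_n)$ into edge-disjoint parts --- does the work. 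Granting it, the displayed chain of inequalities collapses to equalities and the lemma follows.
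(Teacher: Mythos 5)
Your proposal is correct and follows essentially the same route as the paper's proof: exhibit the $(n-1)2^{n-2}$ $E_i$-alternating $4$-cycles as a compatible family, apply Theorem \ref{c'(M)} for the lower bound, and match it against the upper bound of Theorem \ref{bound}. The only cosmetic differences are that you count the squares directly (choice of $j$ times $2^{n-2}$ assignments) where the paper double-counts over edges of $E_1$, and you verify compatibility by hand where the paper invokes the general fact that any two distinct $M$-alternating $4$-cycles are compatible.
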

\begin{proof}
It is sufficient to discuss $E_1$.
Clearly, $E_1$ is a perfect matching of $Q_n$. For vertices $x=x_1x_2\cdots x_n$ and $y=\bar{x}_1x_2\cdots x_n$, the edge $xy\in E_1$ belongs to $n-1$ $E_1$-alternating $4$-cycles. Over all edges of $E_1$, since each $E_1$-alternating $4$-cycle is counted twice, there are $\frac{(n-1)2^{n-1}}{2}=(n-1)2^{n-2}$ distinct $E_1$-alternating $4$-cycles in $Q_n$. Since any two distinct $E_1$-alternating $4$-cycles are compatible, $c'(E_1)\geq(n-1)2^{n-2}$. So
$af(Q_n, E_1)\geq c'(E_1)\geq(n-1)2^{n-2}$ by Theorem \ref{c'(M)}. Since $Af(Q_n)\leq(n-1)2^{n-2}$ by Theorem \ref{bound}, $af(Q_n, E_1)=Af(Q_n)=(n-1)2^{n-2}$.
\end{proof}

The above three examples show that the upper bound in Theorem \ref{bound} is tight.
\subsection{Nice perfect matchings}
In the following, we will characterize the nice perfect matchings of a graph.
The set of neighbors of a vertex $v$ in $G$ is denoted by $N_G(v)$. The degree of a vertex $v$ is the cardinality of $N_G(v)$, denoted by $d_G(v)$.

\begin{thm}\label{sufficiency and necessary}
For any perfect matching $M$ of a simple graph $G$,  $M$ is a nice perfect matching of $G$ if and only if for any two edges $e_1=xy$ and $e_2=uv$ of $M$, $xu\in E(G)$ if and only if $yv\in E(G)$, and $xv\in E(G)$ if and only if $yu\in E(G)$.
\end{thm}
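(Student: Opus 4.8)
The plan is to recast the condition geometrically and then prove the two implications, the forward one (``nice $\Rightarrow$ condition'') being the heart of the matter. For a vertex $w$ let $M(w)$ denote its $M$-partner. Given matching edges $xy$ and $uv$, an $M$-alternating $4$-cycle can contain both only as $x\,u\,v\,y$ (needing $xu,yv\in E(G)$) or as $x\,v\,u\,y$ (needing $xv,yu\in E(G)$); reading the two biconditionals of the theorem for the pair $\{xy,uv\}$ as $xu\in E(G)\Leftrightarrow yv\in E(G)$ and $xv\in E(G)\Leftrightarrow yu\in E(G)$, I would first show that the stated condition is equivalent to the assertion that every non-matching edge $ab$ of $G$ lies on an $M$-alternating $4$-cycle, i.e.\ its \emph{partner} $M(a)M(b)$ is again an edge, in which case $ab$ and $M(a)M(b)$ are exactly the two non-matching edges of the $4$-cycle $a\,b\,M(b)\,M(a)$. (This is the same as saying the partner-swap is a graph automorphism.) For the easy implication I assume this and pair each non-matching edge with its partner; no non-matching edge is its own partner, so this is a fixed-point-free pairing of the $\tfrac{2e(G)-v(G)}{2}$ non-matching edges into $\tfrac{2e(G)-v(G)}{4}$ such $4$-cycles. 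Any two of them meet only in $M$-edges (a shared vertex forces its unique incident $M$-edge onto both), so the family is compatible and $c'(M)=\tfrac{2e(G)-v(G)}{4}$; with Theorems \ref{c'(M)} and \ref{bound} this forces equality throughout (\ref{upper}), so $M$ is nice.

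For the converse I argue the contrapositive: if some non-matching edge $ab$ has no partner ($M(a)M(b)\notin E(G)$), then $M$ is not nice. I pick a set $A$ containing exactly one endpoint of each $M$-edge, chosen so that $ab$ is split ($a\in A$, $b\in\bar A$); replacing $A$ by $\bar A$ if necessary I may assume $|E(A)|\le|E(\bar A)|$. In the bipartite graph $G':=G(A,\bar A)$ the edge $ab$ still has no partner. The construction in the proof of Theorem \ref{bound} gives $af(G,M)\le af(G',M)+|E(A)|$. Writing $n:=|E(G')\setminus M|$ and using $|E(A)|+|E(\bar A)|+n=e(G)-\tfrac{v(G)}{2}$ together with $|E(A)|\le|E(\bar A)|$, it then suffices to establish the bipartite statement $af(G',M)<n/2$: this yields $af(G,M)<\tfrac{n}{2}+\tfrac12\bigl(e(G)-\tfrac{v(G)}{2}-n\bigr)=\tfrac{2e(G)-v(G)}{4}$, so $M$ is not nice.

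Hence the crux is the bipartite lemma: if $H$ is bipartite with perfect matching $M$ and a non-matching edge with no partner, then $af(H,M)<\tfrac12 N$, where $N=|E(H)\setminus M|$. I would prove the clean equivalent that $af(H,M)=\tfrac12N$ forces every non-matching edge onto an $M$-alternating $4$-cycle, by passing to a digraph: contract each $M$-edge to a vertex and orient every non-matching edge from the class of its $A$-endpoint to that of its $\bar A$-endpoint, obtaining a simple digraph $D$ with $N$ arcs in which $M$-alternating cycles correspond to directed cycles, $M$-alternating $4$-cycles to anti-parallel arc pairs, and (by Lemma \ref{antiforcing set lem}) anti-forcing sets to feedback arc sets, so that $af(H,M)$ is the minimum feedback arc set of $D$. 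For a linear order $\pi$ of its vertices let $F(\pi)$ count forward arcs; the maximum acyclic subgraph has $\max_\pi F(\pi)$ arcs and the minimum feedback arc set has $N-\max_\pi F(\pi)\le N/2$ edges, the inequality because a uniform random order has $\mathbb{E}[F(\pi)]=N/2$. If $af(H,M)=N/2$ then $\max_\pi F(\pi)=N/2$; since the average is also $N/2$ and no order exceeds it, $F(\pi)\equiv N/2$. Finally, swapping two adjacent vertices $i,j$ changes $F$ by $\mathbf 1[j\to i]-\mathbf 1[i\to j]$, which must vanish, so $i\to j$ is an arc iff $j\to i$ is; thus $D$ is symmetric and every non-matching edge of $H$ lies on an $M$-alternating $4$-cycle.

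Assembling these, the lemma applied to $G'$ shows $ab$ would have a partner in $G'$, hence in $G$, contradicting its choice, which proves the converse. I expect the two delicate points to be (i) verifying that anti-forcing sets of the bipartite graph correspond \emph{exactly} to feedback arc sets of $D$, so that deleting only non-matching edges is enough, and (ii) the averaging step $F(\pi)\equiv N/2$, which is precisely what upgrades the trivial bound $af\le N/2$ into the needed rigidity; the remaining manipulations with $|E(A)|,|E(\bar A)|$ and the invocations of Theorems \ref{bound} and \ref{c'(M)} are routine bookkeeping.
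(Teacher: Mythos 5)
Your proof is correct, and its two halves relate differently to the paper's. The sufficiency direction is essentially the paper's argument: both build a compatible family of $M$-alternating $4$-cycles and squeeze between Theorems \ref{c'(M)} and \ref{bound}; the paper counts the cycles via $c'_{wz}(M)=d_G(w)-1$ summed over $M$, while you count them by the fixed-point-free partner pairing $ab\leftrightarrow M(a)M(b)$ of the non-matching edges --- the same total $\frac{2e(G)-v(G)}{4}$, slightly more cleanly. The necessity direction is a genuinely different route, though at heart a repackaging of the same mechanism: the paper fixes a transversal $A$, shows for \emph{every} bijection $\omega:M\to\{1,\dots,|M|\}$ that $E_A^\omega\cup E(A)$ and $E_{\bar{A}}^\omega\cup E(\bar{A})$ are anti-forcing sets whose sizes sum to $e(G)-\frac{v(G)}{2}$, so niceness pins each at $\frac{2e(G)-v(G)}{4}$, and then derives a contradiction by swapping the two lowest labels $\omega_1\leftrightarrow\omega_2$, with a second case ($x\in A$, $u\in\bar{A}$) handled by re-choosing the transversal. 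Your $F(\pi)\equiv N/2$ is exactly the paper's ``every $\omega$ gives a minimum anti-forcing set,'' and your adjacent transposition is exactly their label swap; but you route it through the contracted digraph, where anti-forcing sets become feedback arc sets (valid: anti-forcing sets are by definition subsets of $E(G)\setminus M$, Lemma \ref{antiforcing set lem} characterizes them as covering all $M$-alternating cycles, and every vertex of such a cycle carries its own $M$-edge, so cycles of $G'$ biject with directed cycles of $D$), the averaging bound $\max_\pi F(\pi)\ge N/2$ replaces the paper's reliance on the complementary pair of anti-forcing sets, and choosing $A$ to split the violating edge $ab$ up front eliminates the paper's case analysis. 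What your version buys: a clean conceptual identity ($af(G',M)$ equals the minimum feedback arc set of $D$, and niceness equals symmetry of $D$), a rederivation of Theorem \ref{bound} itself without Lemma \ref{complementary set}, and a case-free converse; what the paper's buys: it never leaves $G$ and needs no digraph translation. One step you import from the paper merits the justification the paper itself omits: that $S\cup E(A)$ is anti-forcing in $G$ holds because an $M$-alternating cycle avoiding $E(A)$ cannot use any edge inside $\bar{A}$ --- traversing the cycle, once an $M$-edge is crossed from $\bar{A}$ to $A$ the next non-matching edge must cross back to $\bar{A}$, so the traversal ``direction'' of $M$-edges can switch at an $E(\bar{A})$ edge but never switch back, and cyclic closure forces zero switches; with that line added, your chain $af(G,M)\le af(G',M)+|E(A)|<\frac{n}{2}+\frac{1}{2}\bigl(e(G)-\frac{v(G)}{2}-n\bigr)=\frac{2e(G)-v(G)}{4}$ is airtight.
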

\begin{proof}
Here we only need to consider simple connected graphs. To show the sufficiency, we firstly estimate the value of $c'(M)$ for such perfect matching $M$ of $G$. Let $c_{wz}'(M)$ be the number of $M$-alternating $4$-cycles that contain edge $wz$. Since for any two edges $e_1=xy$ and $e_2=uv$ of $M$, $xu\in E(G)$ if and only if $yv\in E(G)$, and $xv\in E(G)$ if and only if $yu\in E(G)$, $c_{wz}'(M)=d_G(w)-1=d_G(z)-1$ for any edge $wz$ of $M$.
Obviously, any two distinct $M$-alternating $4$-cycles are compatible. Then
\begin{equation}
\begin{split}
c'(M) &\geq \frac{\sum\limits_{wz\in M}c_{wz}'(M)}{2} \\
&= \frac{\sum\limits_{wz\in M}\frac{1}{2}[(d_G(w)-1)+(d_G(z)-1)]}{2} \\
&= \frac{\sum\limits_{w\in V(G)}\frac{1}{2}(d_G(w)-1)}{2}\\
&= \frac{e(G)-\frac{v(G)}{2}}{2}. \\
\end{split}
\end{equation}

By Theorems \ref{bound} and \ref{c'(M)}, $c'(M)\leq af(G, M)\leq Af(G)\leq \frac{2e(G)-v(G)}{4}$. So $af(G, M)= \frac{2e(G)-v(G)}{4}$, that is, $M$ is a nice perfect matching of $G$.

Conversely, suppose that $M$ is a nice perfect matching of $G$. Let $A$ be a vertex subset of $G$ consisting of one end vertex for each edge of $M$ and $\bar{A}:=V(G)\setminus A$. Then $(A, \bar{A})$ is a partition of $V(G)$.
Given any bijection $\omega: M\rightarrow \{1, \ldots, |M|\}$, we extend weight function $\omega$ on $M$ to the vertices of $G$: if $v\in V(G)$ is incident with $e\in M$, then $\omega(v):=\omega(e)$. This weight function $\omega$ gives a natural ordering of  the vertices in $A$ $(\bar{A})$.  Clearly, if $e=xy\in M$, then $\omega(x)=\omega(y)$, otherwise, $\omega(x)\neq\omega(y)$. Set
$$E_A^\omega:=\{xy\in E(G): \omega(x)>\omega(y), x\in A~\text{and}~y\in \bar{A}\},$$
$$E_{\bar{A}}^\omega:=\{xy\in E(G): \omega(x)<\omega(y), x\in A~\text{and}~y\in \bar{A}\}.$$

Since $G-E_A^\omega\cup E(A)$ has a unique perfect matching $M$, $E_A^\omega\cup E(A)$ is an anti-forcing set of $M$ in $G$.  Similarly, $E_{\bar{A}}^\omega\cup E(\bar{A})$ is also an anti-forcing set of $M$ in $G$. Since $M$ is a nice perfect matching of $G$, $af(G, M)=\frac{2e(G)-v(G)}{4}$. So $|E_A^\omega\cup E(A)|\geq\frac{2e(G)-v(G)}{4}$, $|E_{\bar{A}}^\omega\cup E(\bar{A})|\geq\frac{2e(G)-v(G)}{4}$. Since $|E_A^\omega\cup E(A)|+|E_{\bar{A}}^\omega\cup E(\bar{A})|=e(G)-|M|=e(G)-\frac{v(G)}{2}$, $|E_A^\omega\cup E(A)|=|E_{\bar{A}}^\omega\cup E(\bar{A})|=\frac{2e(G)-v(G)}{4}$.
Hence $E_A^\omega\cup E(A)$ is a minimum anti-forcing set of $M$ in $G$.

Now we show that for any two edges $e_1=xy$ and $e_2=uv$ of $M$,  $xu\in E(G)$ if and only if $yv\in E(G)$, and $xv\in E(G)$ if and only if $yu\in E(G)$. It is sufficient to show that $xv\in E(G)$ implies $yu\in E(G)$.
Given two bijections $\omega_1: M\rightarrow \{1, \ldots, |M|\}$ and $\omega_2: M\rightarrow \{1, \ldots, |M|\}$ with $\omega_1(e_1)=1$, $\omega_1(e_2)=2$, $\omega_2(e_1)=2$, $\omega_2(e_2)=1$ and $\omega_2|_{M\setminus\{e_1,~ e_2\}}=\omega_1|_{M\setminus\{e_1,~e_2\}}$. As the above extension of $\omega$, we extend the weight functions
$\omega_1$ and $\omega_2$ on $M$ to the vertices of $G$.

We first consider the case that $x,u\in A$. Suppose to the contrary that $xv\in E(G)$ but $yu\notin E(G)$.
Set $A':=A\setminus\{x, u\}$, $\bar{A'}:=\bar{A}\setminus\{y, v\}$,
$E_1':=\{wz\in E(G):\omega_1(w)>\omega_1(z), w\in A'~\text{and}~z\in \bar{A'}\}$. Then
\begin{equation}\label{2}
E_A^{\omega_2}\cup E(A)=\{xv\}\cup E(\{y, v\}, A')\cup E_1'\cup E(A)=\{xv\}\cup E_A^{\omega_1}\cup E(A).\\
\end{equation}
By the above proof we know that both $E_A^{\omega_1}\cup E(A)$ and $E_A^{\omega_2}\cup E(A)$ are minimum anti-forcing sets of $M$ in $G$, it contradicts to the equation (\ref{2}). Thus $yu\in E(G)$.

For the case that $x\in A$ and $u\in \bar{A}$, set $U:=(A\setminus\{v\})\cup\{u\}$, $\bar{U}:=(\bar{A}\setminus\{u\})\cup\{v\}$. Then each edge in $M$ is incident with exactly one vertex in $U$. Substituting the
partition $(A, \bar{A})$ of $V(G)$ with the partition $(U, \bar{U})$, by a similar argument as the above case, we can also show that $xv\in E(G)$ implies $yu\in E(G)$.
\end{proof}
\begin{figure}[htbp!]
\centering
\includegraphics[height=2.65cm]{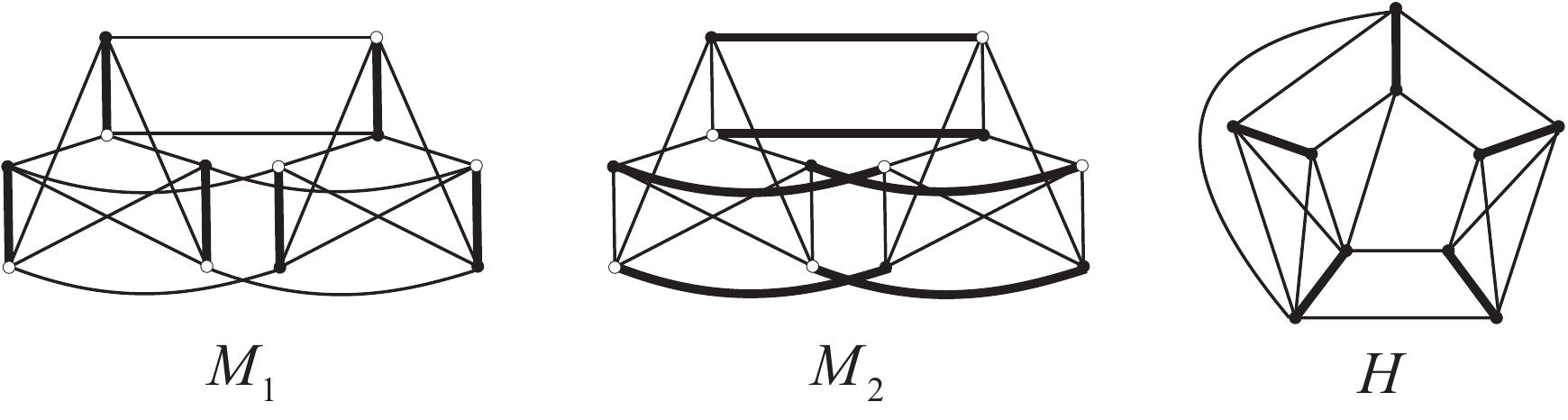}
\caption{\label{Fig.3}{\small Two nice perfect matchings $M_1$ and $M_2$ of $G'$ and a nice perfect matching of $H$.}}
\end{figure}

By Theorem \ref{sufficiency and necessary}, we can easily check whether a perfect matching of a graph is nice.
For example, in Fig. \ref{Fig.3}, the two perfect matchings $M_1$ and $M_2$ of the bipartite graph $G'$ are nice,  and the perfect matching of the non-bipartite graph $H$ is also nice.

\begin{prop}\label{succession}
Let $M$ be a nice perfect matching of $G$ and $S$ a subset of $V(G)$. Then $M\cap E(S)$ is a nice perfect matching of $G[S]$ if $M\cap E(S)$ is a perfect matching of $G[S]$.
\end{prop}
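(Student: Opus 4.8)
The plan is to reduce the whole statement to the combinatorial characterization of nice perfect matchings provided by Theorem \ref{sufficiency and necessary}, exploiting the fact that $G[S]$ is a \emph{vertex-induced} subgraph. Write $M' := M\cap E(S)$ and assume, as in the hypothesis, that $M'$ is a perfect matching of $G[S]$. Since $G[S]$ is simple, Theorem \ref{sufficiency and necessary} applies to $M'$ in $G[S]$, so it suffices to verify the adjacency condition of that theorem for $M'$ inside $G[S]$.

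First I would fix two arbitrary edges $e_1=xy$ and $e_2=uv$ of $M'$. Because $M'\subseteq M$ consists exactly of those edges of $M$ whose two endpoints both lie in $S$, all four vertices $x,y,u,v$ belong to $S$, and $e_1,e_2$ are in particular edges of $M$. This observation that every relevant endpoint lies in $S$ is the only point requiring any care, and it is immediate from the definition $M'=M\cap E(S)$.

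Next I would invoke the induced-subgraph property. By the definition of $G[S]$, for any two vertices $a,b\in S$ we have $ab\in E(S)$ if and only if $ab\in E(G)$; that is, adjacency between vertices of $S$ is both preserved and reflected by passing to $G[S]$. Applying this to each of the four pairs $xu$, $yv$, $xv$, $yu$ among $\{x,y,u,v\}$, the four ``if and only if'' adjacency statements that I must establish for $M'$ in $G[S]$ are term-by-term equivalent to the corresponding statements in $G$.

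Finally, since $M$ is a nice perfect matching of $G$, Theorem \ref{sufficiency and necessary} yields that $xu\in E(G)$ if and only if $yv\in E(G)$, and $xv\in E(G)$ if and only if $yu\in E(G)$. Transferring these two equivalences back to $G[S]$ through the induced-subgraph property gives exactly the required condition for the edges $e_1,e_2$ of $M'$ in $G[S]$. As $e_1,e_2$ were arbitrary, the reverse direction of Theorem \ref{sufficiency and necessary} concludes that $M'$ is a nice perfect matching of $G[S]$. I do not anticipate any real obstacle: the argument is a direct restriction of the characterization theorem, and the whole content lies in recognizing that induced subgraphs neither create nor destroy edges among vertices of $S$.
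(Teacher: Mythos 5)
Your proof is correct and takes exactly the paper's approach: the paper's entire proof of this proposition is the single line ``By Theorem \ref{sufficiency and necessary}, it holds,'' and you have simply made the intended details explicit. Your two observations --- that every edge of $M\cap E(S)$ has both endpoints in $S$, and that the vertex-induced subgraph $G[S]$ both preserves and reflects adjacency among vertices of $S$ --- are precisely what lets the characterization of Theorem \ref{sufficiency and necessary} transfer verbatim from $M$ in $G$ to $M\cap E(S)$ in $G[S]$.
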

\begin{proof}
By Theorem \ref{sufficiency and necessary}, it holds.
\end{proof}

In the proof of Theorem \ref{sufficiency and necessary}, we notice that $d_G(u)=d_G(v)$ for every edge $uv$ of a nice perfect matching of $G$. So we have the following necessary but not sufficiency condition for the upper bound in Theorem \ref{bound} to be attained.
\begin{prop}\label{necessary_condition}
Let $G$ be a graph with a perfect matching. Then $Af(G)<\frac{2e(G)-v(G)}{4}$ if there are an odd number of vertices of the same degree in $G$.
\end{prop}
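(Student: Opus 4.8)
The plan is to argue by contraposition, exploiting the degree observation recorded just before the statement: by the proof of Theorem~\ref{sufficiency and necessary}, every edge $uv$ of a nice perfect matching of $G$ satisfies $d_G(u)=d_G(v)$. This single fact already furnishes a parity obstruction, so the whole argument is short and elementary.

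First I would suppose, toward a contradiction, that $G$ possesses a nice perfect matching $M$, and for each integer $d$ let $V_d$ denote the set of vertices of $G$ of degree exactly $d$. Since $M$ is a perfect matching of $G$, each vertex $v\in V_d$ is incident to exactly one edge $uv\in M$; and since every edge of a nice perfect matching joins two vertices of equal degree, its partner $u$ again lies in $V_d$. Hence the edges of $M$ meeting $V_d$ stay inside $V_d$ and cover each of its vertices exactly once, so $M$ restricts to a perfect matching of $V_d$ and $|V_d|$ is even for every $d$. Taking the contrapositive, if some degree value $d$ admits an odd number of vertices of degree $d$ in $G$, then $G$ can have no nice perfect matching.

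To finish, I would invoke the definition of extremality together with Theorem~\ref{bound}: a graph has a nice perfect matching precisely when $Af(G)=\frac{2e(G)-v(G)}{4}$. The absence of such a matching therefore rules out equality, and since Theorem~\ref{bound} always guarantees $Af(G)\leq\frac{2e(G)-v(G)}{4}$, this forces the strict inequality $Af(G)<\frac{2e(G)-v(G)}{4}$, as required.

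There is no genuine obstacle in this proof; everything reduces to the degree-preservation property of nice perfect matchings, after which only an elementary counting step remains. The one point I would state explicitly, to keep the final deduction airtight, is the equivalence between ``the upper bound of Theorem~\ref{bound} is attained'' and ``a nice perfect matching exists,'' so that the passage from $Af(G)\neq\frac{2e(G)-v(G)}{4}$ to the strict inequality leaves no gap.
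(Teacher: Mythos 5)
Your proposal is correct and follows essentially the same route as the paper, which derives the proposition directly from the observation that $d_G(u)=d_G(v)$ for every edge $uv$ of a nice perfect matching (via Theorem \ref{sufficiency and necessary}), so that each degree class is matched within itself and must have even size, while Theorem \ref{bound} turns non-attainment into the strict inequality. Your explicit remark that the equality $Af(G)=\frac{2e(G)-v(G)}{4}$ is equivalent to the existence of a nice perfect matching (since $Af(G)$ is a maximum over the finitely many perfect matchings and hence attained) is a point the paper leaves implicit, and it is a worthwhile clarification rather than a different approach.
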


Proposition \ref{necessary_condition} is not sufficient. For example,
for a hexagonal system with a perfect matching, it does not have a nice perfect matching by Theorem \ref{sufficiency and necessary}, that is, its maximum anti-forcing number can not be the upper bound in Theorem \ref{bound}, but it has an even number of vertices of degree $3$ and an even number of vertices of degree $2$.

\cite{GA} introduced a \emph{reversing involution} of a connected bipartite graph $G$ with partite sets $X$ and $Y$ as  an automorphism $\alpha$ of $G$ of order two such that $\alpha(X)=Y$ and $\alpha(Y)=X$. Here we give the following definition of a general graph.
\begin{defn}
Suppose that $G$ is a simple connected graph. An edge-involution of $G$ is an automorphism $\alpha$ of $G$ of order two such that $v$ and $\alpha(v)$ are adjacent for any vertex $v$ in $G$.
\end{defn}

Hence an edge-involution of a bipartite graph is also a reversing involution, but a reversing involution of a bipartite graph may not be an edge-involution.
In the following, we establish a relationship between a nice perfect matching and an edge-involution of $G$.
\begin{thm}\label{edge-involution method}
Let $G$ be a simple connected graph. Then there is a one-to-one correspondence between the nice perfect matchings of $G$ and the edge-involutions of $G$.
\end{thm}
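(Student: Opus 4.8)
The plan is to construct two explicit maps, one sending each edge-involution to a perfect matching and one sending each nice perfect matching to an involution, and then to verify that these maps are well defined and mutually inverse. Given an edge-involution $\alpha$, I would first observe that $\alpha$ has no fixed vertex: since $v$ and $\alpha(v)$ must be adjacent in a simple graph, $v\neq\alpha(v)$. Because $\alpha$ has order two, the pairs $\{v,\alpha(v)\}$ partition $V(G)$, and each such pair is an edge; hence $M_\alpha:=\{v\alpha(v):v\in V(G)\}$ is a perfect matching. To see that $M_\alpha$ is nice, I would apply the characterization of Theorem \ref{sufficiency and necessary}. Take two edges $e_1=xy$ and $e_2=uv$ of $M_\alpha$, so that $y=\alpha(x)$ and $v=\alpha(u)$. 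Since $\alpha$ is an automorphism, $xu\in E(G)$ iff $\alpha(x)\alpha(u)=yv\in E(G)$, and $xv\in E(G)$ iff $\alpha(x)\alpha(v)=yu\in E(G)$ (using $\alpha(v)=u$); this is exactly the required condition, so $M_\alpha$ is nice.

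Conversely, given a nice perfect matching $M$, I would define $\alpha_M$ to be the permutation that swaps the two endpoints of each edge of $M$; that is, $\alpha_M(x)=y$ whenever $xy\in M$. Clearly $\alpha_M$ is an involution (nontrivial since $M\neq\emptyset$) and $v$ is adjacent to $\alpha_M(v)$ for every $v$, so the only thing to check is that $\alpha_M$ is an automorphism, i.e.\ that it preserves adjacency. This is the crux of the argument and the place where the nice hypothesis enters. Let $ab\in E(G)$. If $ab\in M$ then $\alpha_M(a)\alpha_M(b)=ba=ab\in E(G)$. Otherwise $a$ and $b$ lie in distinct matching edges $e_1=a\alpha_M(a)$ and $e_2=b\alpha_M(b)$, and writing $x=a$, $y=\alpha_M(a)$, $u=b$, $v=\alpha_M(b)$, the edge $ab=xu$ forces $yv=\alpha_M(a)\alpha_M(b)\in E(G)$ by the first clause of the condition in Theorem \ref{sufficiency and necessary}. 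Thus $\alpha_M$ sends edges to edges; being its own inverse, it reflects edges as well, so $\alpha_M$ is an automorphism and hence an edge-involution.

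Finally I would check that the two assignments are mutually inverse, which is immediate from the constructions: starting from $M$, the edge of $M_{\alpha_M}$ through a vertex $v$ is $v\alpha_M(v)$, which is exactly the $M$-edge at $v$, so $M_{\alpha_M}=M$; and starting from $\alpha$, the $M_\alpha$-edge at $v$ is $v\alpha(v)$, whence $\alpha_{M_\alpha}(v)=\alpha(v)$, so $\alpha_{M_\alpha}=\alpha$. The main obstacle is the automorphism verification in the previous paragraph: everything hinges on translating the two ``if and only if'' clauses of Theorem \ref{sufficiency and necessary} into the statement that $\alpha_M$ preserves \emph{all} adjacencies (not only those within the matching), and symmetrically on using the automorphism property of $\alpha$ to recover exactly those clauses for $M_\alpha$.
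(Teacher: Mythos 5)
Your proof is correct and follows essentially the same route as the paper's: both directions construct the same explicit maps $M\mapsto\alpha_M$ and $\alpha\mapsto M_\alpha$ and verify niceness and the automorphism property via the characterization in Theorem~\ref{sufficiency and necessary}. Your writeup is in fact slightly more careful than the paper's, spelling out the fixed-point-free observation, the edge-reflection step (using that $\alpha_M$ is its own inverse), and the mutual-inverse check that the paper dismisses with ``we can also see that $\alpha_{M'}=\alpha$.''
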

\begin{proof}
For a nice perfect matching $M$ of $G$, we define a bijection $\alpha_M$ of order $2$ on $V(G)$ as follows: for any vertex $v$ of $G$, there is exactly one edge $e$ in $M$ such that $v$ is incident with $e$, let $\alpha_M(v)$ be the other end-vertex of $e$. Let $x$ and $y$ be any two distinct vertices of $G$. If $xy\in M$, then $\alpha_M(x)=y$, $\alpha_M(y)=x$ and $\alpha_M(x)\alpha_M(y)=yx\in E(G)$.
If $xy\notin M$ ($x$ may not be adjacent to $y$), then both $x\alpha_M(x)$ and $y\alpha_M(y)$ belong to $M$. Since $M$ is a nice perfect matching, $xy\in E(G)$ if and only if $\alpha_M(x)\alpha_M(y)\in E(G)$ by Theorem \ref{sufficiency and necessary}. This implies that $\alpha_M$ is an automorphism of $G$. Thus $\alpha_M$ is an edge-involution of $G$.

Conversely, let $\alpha$ be an edge-involution of $G$. Then for any vertex $y$ of $G$, $y\alpha(y)\in E(G)$. Since $\alpha$ is a bijection of order $2$ on $V(G)$, $M':=\{y\alpha(y): y\in V(G)\}$ is a perfect matching of $G$. For any two distinct edges $y_1\alpha(y_1)$ and $y_2\alpha(y_2)$ of $M'$, $y_1y_2\in E(G)$ if and only if $\alpha(y_1)\alpha(y_2)\in E(G)$, and $y_1\alpha(y_2)\in E(G)$ if and only if $\alpha(y_1)y_2\in E(G)$ since $\alpha$ is an automorphism of order $2$ of $G$.
So $M'$ is a nice perfect matching of $G$ by Theorem \ref{sufficiency and necessary}. We can also see that $\alpha_{M'}=\alpha$. This establishes a one-to-one correspondence between the nice perfect matchings of $G$ and the edge-involutions of $G$.
\end{proof}

\section{Construction of the extremal graphs}
In the following, we will show that every extremal graph can be constructed from a complete graph $K_2$ by implementing two expansion operations.
\begin{defn}\label{construct defn}
Let $G_i$ be a simple graph with a nice perfect matching $M_i$, $i=1, 2$ \emph{(}note that $V(G_1)\cap V(G_2)=\emptyset$\emph{)}. We define two expansion operations as follows:

$(i)$ $G:=G_i +e +e'$, where $e, e'\notin E(G_i)$ and there are edges $e_1, e_2\in M_i$ such that the four edges $e$, $e'$, $e_1$, $e_2$ form a 4-cycle.

$(ii)$ For $M_1'\subseteq M_1$ and $M_2'\subseteq M_2$ with $|M_1'|=|M_2'|$, given a bijection $\phi$ from $V(M_1')$ to $V(M_2')$ with $uv\in M_1'$ if and only if $\phi(u)\phi(v)\in M_2'$. $G_1$ joins $G_2$ over matchings $M_1'$ and $M_2'$ about bijection $\phi$, denoted by $G_1 \circledast G_2$, is a graph with vertex set $V(G_1)\cup V(G_2)$ and edge set $E(G_1)\cup E(G_2)\cup E'$,  where $E':=\{u\phi(u): u\in V(M_1')\}$.
\end{defn}
\begin{figure}[htbp!]
\centering
\includegraphics[height=4.5cm]{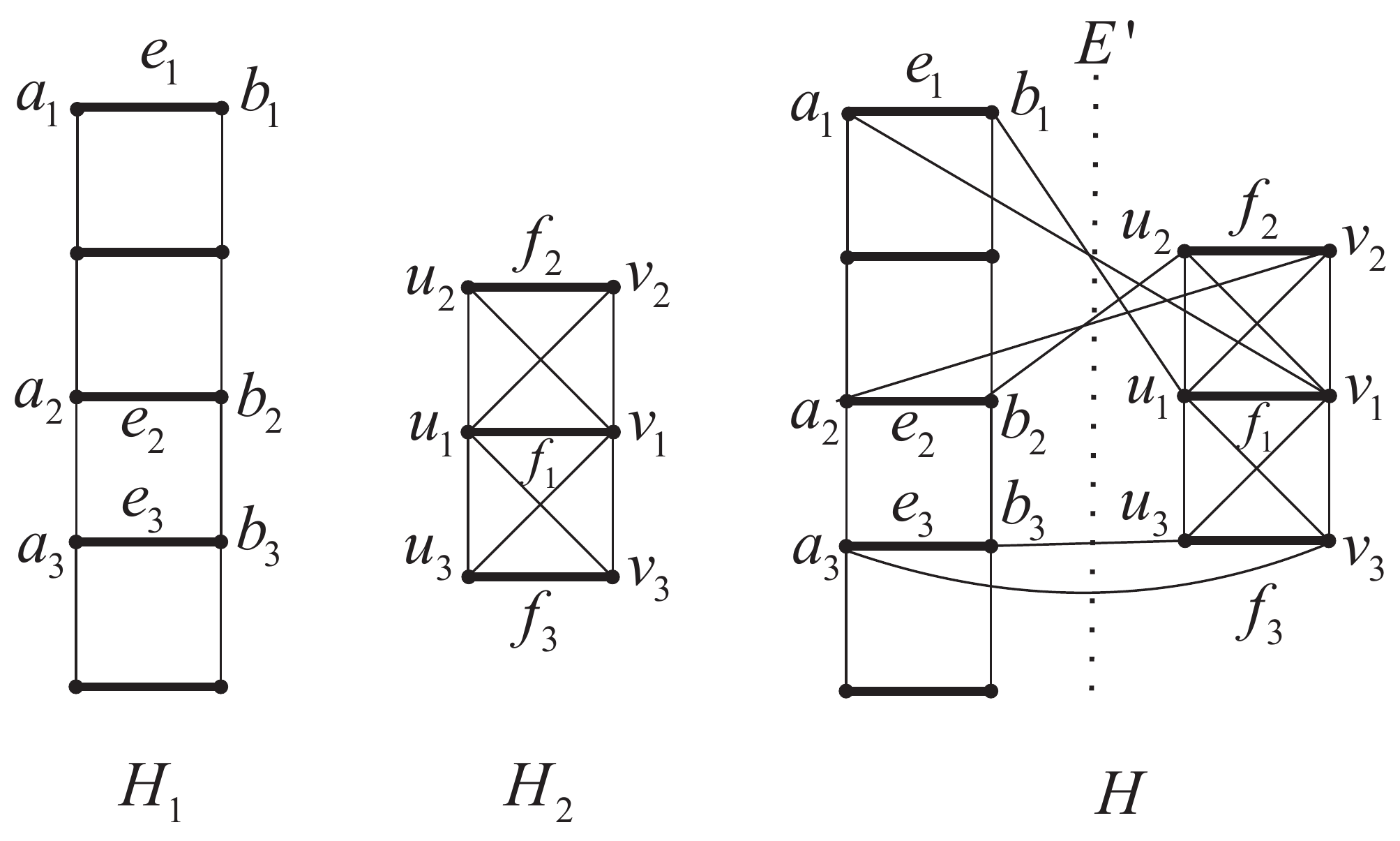}
\caption{\label{Fig.4}{\small $H=H_1\circledast H_2$ over matchings $M_1'$ and $M_2'$ about bijection $\phi'$.}}
\end{figure}

For example, in Fig. \ref{Fig.4} graph $H$ is $H_1\circledast H_2$ over matchings $M_1'$ of $H_1$ and $M_2'$ of $H_2$ about bijection $\phi'$, where $M_1'=\{e_1, e_2, e_3\}$, $M_2'=\{f_1, f_2, f_3\}$, $\phi'(a_i)=v_i$, $\phi'(b_i)=u_i$, $i=1, 2, 3$. $H$ has a nice perfect matching which is marked by thick edges in Fig. \ref{Fig.4}.
Recall that $nK_2$ is the disjoint union of  $n$ copies of $K_2$.
\begin{thm}\label{construct thm}
A simple graph $G$ is an extremal graph if and only if it can be constructed from $K_2$ by implementing operations $(i)$ or $(ii)$ in Definition \emph{\ref{construct defn}} \emph{(}regardless of the orders\emph{)}.
\end{thm}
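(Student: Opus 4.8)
The plan is to prove the two implications separately, taking the combinatorial criterion for a nice perfect matching in Theorem \ref{sufficiency and necessary} as the main tool; this lets me work uniformly with possibly disconnected intermediate graphs, where the edge-involution language of Theorem \ref{edge-involution method} is not directly available.

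For the sufficiency (constructible $\Rightarrow$ extremal) I would induct along the construction, showing that each operation preserves the existence of a nice perfect matching; the base graph $K_2$ is extremal since its single edge is trivially nice. For operation $(i)$ I claim $M_i$ stays nice in $G=G_i+e+e'$: because $e_1,e_2\in M_i$ are disjoint they must be opposite edges of the added $4$-cycle, so $e,e'$ are its two non-matching edges; writing $e_1=xy$, $e_2=uv$ with $e=yu$ and $e'=xv$, the only adjacencies that change are those between $\{x,y\}$ and $\{u,v\}$, and for the pair $(e_1,e_2)$ the criterion of Theorem \ref{sufficiency and necessary} still holds (the conditions involving $xu,yv$ are untouched, while $xv$ and $yu$ are now both present), all other pairs being unaffected. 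For operation $(ii)$ I would exhibit $M:=M_1\cup M_2$ as a nice perfect matching of $G_1\circledast G_2$: the only new edges lie in $E'$, joining $V(M_1')$ to $V(M_2')$, and for a pair $e_1\in M_1$, $e_2\in M_2$ one uses that $\phi$ carries $M_1'$ onto $M_2'$ edge-by-edge to show that a cross edge $u\phi(u)\in E'$ forces its mirror cross edge to be present too, which is exactly what Theorem \ref{sufficiency and necessary} demands. (Equivalently, $\alpha_{M_1}\sqcup\alpha_{M_2}$ is an edge-involution, since $\phi$ intertwines the two involutions.) Hence every constructible graph is extremal.

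For the necessity (extremal $\Rightarrow$ constructible) I would induct on the number $e(G)-|M|$ of non-matching edges, for a fixed nice perfect matching $M$. If this number is $0$ then $G=nK_2$, which is obtained from $K_2$ by $n-1$ applications of operation $(ii)$ with empty submatchings (disjoint unions), so $G$ is constructible. Otherwise choose a non-matching edge $e=xy$ and let $x',y'$ be the $M$-partners of $x,y$, so $e_1=xx'$ and $e_2=yy'$ lie in $M$. Applying Theorem \ref{sufficiency and necessary} to the pair $(e_1,e_2)$ yields $e':=x'y'\in E(G)$ and shows that $x-y-y'-x'-x$ is a $4$-cycle whose matching edges are $e_1,e_2$; the four vertices are distinct because $e=xy\notin M$, and $e\neq e'$. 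Thus $G=G'+e+e'$ is an instance of operation $(i)$ with $G':=G-e-e'$, and $G'$ is again extremal with the same $M$: deleting $e,e'$ changes only adjacencies between $\{x,x'\}$ and $\{y,y'\}$, after which both $xy$ and $x'y'$ are absent, so the criterion of Theorem \ref{sufficiency and necessary} still holds for $(e_1,e_2)$ and is untouched for every other pair. By induction $G'$ is constructible, hence so is $G$.

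The routine part is the adjacency bookkeeping packaged in the phrase ``only the adjacencies between the two matching edges change.'' The genuinely delicate step is the sufficiency for operation $(ii)$: I must verify that joining $G_1$ and $G_2$ over $\phi$ creates no violation of Theorem \ref{sufficiency and necessary} across the two parts, and this is precisely where the hypothesis that $\phi$ maps $M_1'$ onto $M_2'$ edge-by-edge is essential, since it forces the new edges of $E'$ to occur in mirror pairs compatible with $M_1\cup M_2$. Once this is secured, both directions close by the two inductions, and in the necessity direction the reduction automatically produces an admissible order of operations, which settles the parenthetical ``regardless of the orders.''
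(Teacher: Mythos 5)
Your proposal is correct, but in the necessity direction it takes a genuinely different route from the paper's. The paper inducts on the number of matching edges: it deletes the two end-vertices of a last matching edge $e_n$, invokes Proposition \ref{succession} to keep $\{e_1,\ldots,e_{n-1}\}$ nice in the vertex-induced subgraph $G'$, reattaches $e_n$ either as an isolated edge or via $G''=G'\circledast K_2$ over $\{e_i\}$ and $\{e_n\}$, and then restores the remaining edges incident with $e_n$ by ``several times'' operation $(i)$. You instead induct on the number $e(G)-|M|$ of non-matching edges, peeling off one mirror pair $e=xy$, $e'=x'y'$ at a time (the inverse of operation $(i)$) down to $nK_2$, which you assemble from $K_2$ by operation $(ii)$ with empty submatchings; your bookkeeping for why deleting or adding a mirror pair preserves the criterion of Theorem \ref{sufficiency and necessary} is exactly right, including the checks that the four vertices are distinct, that $e'\notin M$, and that only the one pair $(e_1,e_2)$ is affected. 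Your route buys two things the paper leaves implicit: first, a genuine verification of the sufficiency direction, which the paper dismisses with ``by the definition of the two operations'' even though the niceness-preservation of operation $(i)$ --- also needed to justify the paper's own closing step of applying operation $(i)$ repeatedly to pass from $G''$ to $G$ --- requires precisely your forward check; second, the stronger structural byproduct that operation $(ii)$ with nonempty submatchings is dispensable, since every extremal graph arises from $nK_2$ by operation $(i)$ alone (consistently, a join over $M_1'$ and $M_2'$ decomposes into a disjoint union followed by mirror-pair additions along the $4$-cycles $u\,v\,\phi(v)\,\phi(u)$). What the paper's vertex-deletion induction buys in exchange is brevity, since heredity of niceness is outsourced to Proposition \ref{succession}, and it exhibits operation $(ii)$ in its general form as the natural reattachment step. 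One small point to keep explicit in your write-up: you apply Theorem \ref{sufficiency and necessary} to disconnected intermediate graphs, whereas its proof is phrased for connected graphs; this is harmless because $af(G,M)$, $e(G)$ and $v(G)$ are additive over components and the adjacency criterion is vacuous across components, but since the paper itself passes through $2K_2$ it relies on the same componentwise extension.
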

\begin{proof}
Let $\mathcal{P'}$ be the set of all the graphs that can be constructed from $K_2$ by implementing operations $(i)$ or $(ii)$.
For any graph $G\in\mathcal{P}'$, $G$ is a simple graph with a nice perfect matching by the definition of the two operations.

Conversely, we suppose that $G$ is an extremal graph, that is, $G$ has a nice perfect matching $M=\{e_1, e_2, \ldots, e_n\}$. If $n=1,$ or 2, then $G$ must be isomorphic to $K_2$, $2K_2$, $C_4$ or $K_4$. So $G\in \mathcal{P}'$. Next, we suppose that $n\geq 3$ and it holds for $n-1$. Let $G':=G[\bigcup\limits_{i=1}^{n-1}V(e_i)]$. Then $\{e_1, \ldots, e_{n-1}\}$ is a nice perfect
matching of $G'$ by Proposition \ref{succession}. So $G'\in \mathcal{P}'$ by the induction. If $e_n$ is an isolated edge in $G$, then $G=G'\cup\{e_n\}\in \mathcal{P}'$. Otherwise, $e_n=u_nv_n$ has adjacent edges $u_nv_i$ and $v_nu_i$, or $u_nu_i$ and $v_nv_i$ for some $i\in \{1, \ldots, n-1\}$, where $u_iv_i=e_i\in M$.
Let $G''=G' \circledast K_2$ over matchings $\{e_i\}$ and $\{e_n\}$ about bijection $\phi: \{u_i, v_i\}\rightarrow \{u_n, v_n\}$. So $G''\in \mathcal{P}'$. Then $G$ can  be constructed from $G''$ by implementing several times operations $(i)$. So $G\in \mathcal{P}'$.
\end{proof}
\begin{figure}[htbp!]
\centering
\includegraphics[height=6.8cm]{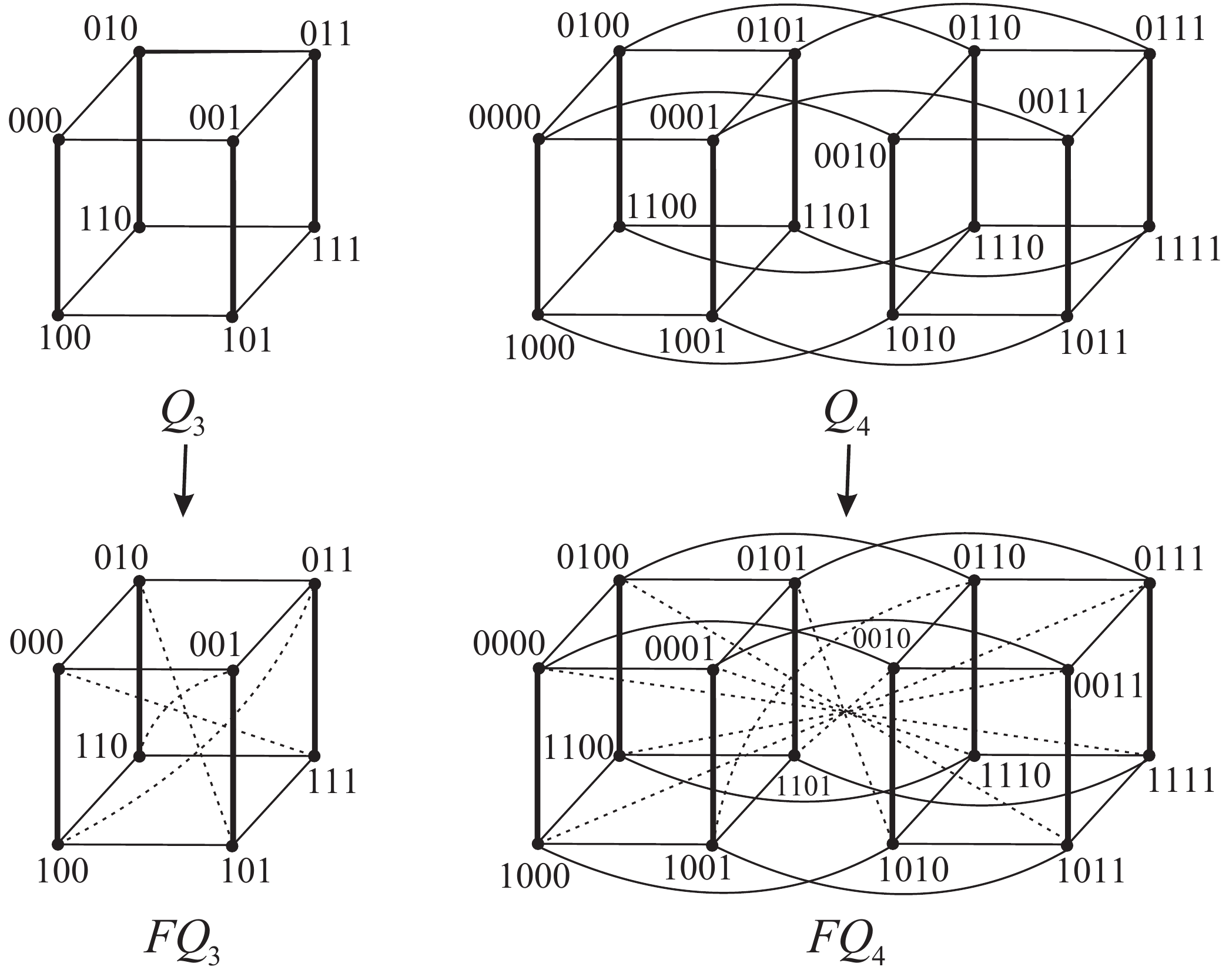}
\caption{\label{Fig.5}{\small The nice perfect matchings $E_1$ of $Q_3$ and $Q_4$ are depicted by thick edges; the dashed edges are the complementary edges.}}
\end{figure}

As a variant of the $n$-dimensional hypercube $Q_n$, the $n$-dimensional \emph{folded hypercube} $FQ_n$, proposed first by \cite{IEEE}, is a graph with $V(FQ_n)=V(Q_n)$ and $E(FQ_n)=E(Q_n)\cup\bar{E}$, where $\bar{E}:=\{x\bar{x}: x=x_1x_2\cdots x_n, \bar{x}=\bar{x}_1\bar{x}_2\cdots\bar{x}_n \}$, $\bar{x}_i:=1-x_i$. Each edge in $\bar{E}$ is called a \emph{complementary edge}.
The graphs shown in Fig. \ref{Fig.5} are $FQ_3$ and $FQ_4$, respectively.
\begin{cor}\label{FQ_n extremal}
$FQ_n$ is an extremal graph and $Af(FQ_n)=n2^{n-2}$.
\end{cor}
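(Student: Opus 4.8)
The plan is to combine the universal upper bound of Theorem~\ref{bound} with the edge-involution correspondence of Theorem~\ref{edge-involution method}. First I would pin down the target value. Since $FQ_n$ is obtained from the $n$-regular graph $Q_n$ on $2^n$ vertices by adding the $2^{n-1}$ complementary edges $\bar E$, it is $(n+1)$-regular, so $v(FQ_n)=2^n$ and $e(FQ_n)=n2^{n-1}+2^{n-1}=(n+1)2^{n-1}$. Substituting into Theorem~\ref{bound} gives $\frac{2e(FQ_n)-v(FQ_n)}{4}=\frac{(n+1)2^n-2^n}{4}=n2^{n-2}$, hence $Af(FQ_n)\le n2^{n-2}$.

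To see that the bound is attained it suffices, by Theorem~\ref{edge-involution method}, to exhibit a single edge-involution of $FQ_n$. The natural candidate is the bitwise complement $\alpha(x)=\bar x$. I would verify three things. First, $\alpha$ is an automorphism: flipping every coordinate preserves the number of positions in which two vertices differ, so $\alpha$ maps each $i$-edge of $Q_n$ to an $i$-edge and therefore preserves $E(Q_n)$; moreover it fixes $\bar E$ setwise, since it sends the complementary edge $x\bar x$ to $\bar x x$. Second, $\alpha^2=\mathrm{id}$ because $\bar{\bar x}=x$, so $\alpha$ has order two. Third, $x$ and $\alpha(x)=\bar x$ are adjacent in $FQ_n$ via the complementary edge $x\bar x\in\bar E$, for every vertex $x$. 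Thus $\alpha$ is an edge-involution.

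By Theorem~\ref{edge-involution method}, the perfect matching corresponding to $\alpha$, namely $\bar E=\{x\bar x: x\in V(FQ_n)\}$, is a nice perfect matching of $FQ_n$; equivalently, both inequalities in~(\ref{upper}) hold for $\bar E$. Therefore $FQ_n$ is extremal and $Af(FQ_n)=n2^{n-2}$, as claimed.

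I do not expect a genuine obstacle here: the only points needing care are correctly counting the $2^{n-1}$ added complementary edges in the edge tally, and confirming that $\alpha$ respects $\bar E$ in addition to the hypercube edges. As a remark, the same argument applied to the coordinate-flip translations $x\mapsto x+e_i$, which likewise preserve $\bar E$ and hence remain automorphisms of $FQ_n$, produces $n$ further edge-involutions, anticipating the later count of $n+1$ nice perfect matchings; but only one edge-involution is needed for this corollary.
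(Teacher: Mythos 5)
Your proof is correct, but it takes a genuinely different route from the paper's. The paper derives this corollary from its construction machinery: by Lemma \ref{Af(Q_n)} the class $E_1$ of $1$-edges is a nice perfect matching of $Q_n$, and $FQ_n$ arises from $Q_n$ by repeated applications of expansion operation $(i)$ of Definition \ref{construct defn} --- each pair of complementary edges $x\bar{x}$, $y\bar{y}$ (with $y$ obtained from $x$ by flipping the first bit) forms a $4$-cycle together with the two $E_1$-edges $xy$ and $\bar{x}\bar{y}$ --- so $E_1$ remains a nice perfect matching of $FQ_n$. You instead invoke the edge-involution correspondence of Theorem \ref{edge-involution method}, verifying that the complementation map $\alpha(x)=\bar{x}$ is an automorphism of order two with $x$ adjacent to $\alpha(x)$, so that the complementary matching $\bar{E}$ is nice; your checks (preservation of $i$-edges and of $\bar{E}$, $\alpha^2=\mathrm{id}$, adjacency via complementary edges) are all sound, as is your edge count $e(FQ_n)=(n+1)2^{n-1}$ giving the value $n2^{n-2}$, which the paper leaves implicit. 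Note that the two arguments certify extremality with \emph{different} nice perfect matchings ($E_1$ versus $\bar{E}=E_{n+1}$); your route is independent of Section 3's construction theorem and anticipates the paper's later verification, in the proof of Theorem \ref{nicepm_FQ_n}, that $E_{n+1}$ is nice (done there by checking the condition of Theorem \ref{sufficiency and necessary} directly), whereas the paper's route showcases the expansion operations that characterize all extremal graphs. One small caution on your closing remark: the $n+1$ edge-involutions you describe exhaust the nice perfect matchings only for $n\geq4$, since $FQ_2\cong K_4$ and $FQ_3\cong K_{4,4}$ have $3$ and $24$ nice perfect matchings respectively; this does not affect the corollary itself.
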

\begin{proof}
By Lemma \ref{Af(Q_n)}, $E_1$ is a nice perfect matching of $Q_n$. $FQ_n$ is constructed from $Q_n$ by applying the operation $(i)$ over the nice perfect matching
$E_1$ of $Q_n$ (see Fig. \ref{Fig.5} for $n=3, 4$). So $E_1$ is also a nice perfect matching of the folded hypercube $FQ_n$.
\end{proof}

For any positive integer $n$, a connected graph $G$ with at least $2n+2$ vertices is said to be \emph{$n$-extendable} if every matching of size $n$ is contained in a perfect matching of $G$.
\begin{prop}\label{1-extendable}
Any connected extremal graph $G$ other than $K_2$ is $1$-extendable.
\end{prop}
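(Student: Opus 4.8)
The plan is to reduce $1$-extendability to the statement that every edge of $G$ lies in a perfect matching, and then to produce such a matching explicitly from the edge-involution attached to a nice perfect matching. First I would fix a nice perfect matching $M$ of the extremal graph $G$ and let $\alpha:=\alpha_M$ be the associated edge-involution furnished by Theorem \ref{edge-involution method}; recall that $\alpha$ is an automorphism of $G$ of order two with $v\alpha(v)\in M$ for every vertex $v$. Since $G$ is connected and $G\neq K_2$, we have $|M|\geq 2$, hence $v(G)=2|M|\geq 4=2\cdot 1+2$, so $G$ meets the order requirement in the definition of $n$-extendability with $n=1$. As $G$ is already connected, it then remains only to check that each edge of $G$ is contained in some perfect matching.

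Next I would split into two cases for an edge $e=xy$. If $e\in M$ we are done immediately. So suppose $e=xy\notin M$. Then the two edges $x\alpha(x)$ and $y\alpha(y)$ lie in $M$, and I claim the four vertices $x,\alpha(x),y,\alpha(y)$ are pairwise distinct: indeed $y=\alpha(x)$ would give $xy=x\alpha(x)\in M$, and $x=\alpha(y)$ would give $xy=y\alpha(y)\in M$, each contradicting $xy\notin M$. Because $\alpha$ is an automorphism and $xy\in E(G)$, its image $\alpha(x)\alpha(y)$ is also an edge of $G$ (equivalently, this is the characterization in Theorem \ref{sufficiency and necessary} applied to $e_1=x\alpha(x)$ and $e_2=y\alpha(y)$). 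Consequently $C:=x\,\alpha(x)\,\alpha(y)\,y\,x$ is a cycle of $G$ whose edges alternate between $M$ (the edges $x\alpha(x)$ and $y\alpha(y)$) and $E(G)\setminus M$ (the edges $\alpha(x)\alpha(y)$ and $xy$); that is, $C$ is an $M$-alternating $4$-cycle passing through $e$.

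Finally, taking the symmetric difference $M':=M\,\triangle\,E(C)=\bigl(M\setminus\{x\alpha(x),y\alpha(y)\}\bigr)\cup\{xy,\alpha(x)\alpha(y)\}$ yields a perfect matching of $G$ that contains $e=xy$, as desired. This shows every edge lies in a perfect matching and, together with the connectivity and order bound above, proves that $G$ is $1$-extendable. I do not anticipate a serious obstacle here: the argument is a direct application of the edge-involution $\alpha$, and the only point genuinely requiring care is the distinctness of the four vertices of $C$ (needed for $M'$ to be a bona fide matching), which is precisely where the hypothesis $xy\notin M$ is used.
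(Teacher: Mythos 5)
Your proof is correct and follows essentially the same route as the paper: for an edge $xy\notin M$, use the nice perfect matching property (the paper invokes Theorem \ref{sufficiency and necessary} directly, where you phrase it via the edge-involution $\alpha_M$, which you correctly note is equivalent) to obtain an $M$-alternating $4$-cycle through $xy$, and then take the symmetric difference $M\bigtriangleup E(C)$. Your added checks of the order requirement $v(G)\geq 4$ and the distinctness of the four cycle vertices are details the paper leaves implicit, not a different argument.
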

\begin{proof}
Since $G$ is an extremal graph, it has a nice perfect matching $M$.
For any edge $uv$ of $E(G)\setminus M$ , there are edges $ux$ and $vy$ of $M$.
By Theorem \ref{sufficiency and necessary}, $xy\in E(G)$. So
$uv$ belongs to an $M$-alternating $4$-cycle $C:=uxyvu$. Then $M\bigtriangleup E(C):=(M\cup E(C))\setminus(M\cap E(C))$ is a perfect matching of $G$ that contains edge $uv$. So $G$ is $1$-extendable.
\end{proof}

By Proposition \ref{1-extendable}, any connected extremal graph except for $K_2$ is $2$-connected.

\section{Cartesian decomposition}
The \emph{Cartesian product} $G\Box H$ of two graphs $G$ and $H$ is a graph with vertex set $V(G)\times V(H)=\{(x,u):x\in V(G),u\in V(H)\}$ and two vertices $(x,u)$ and $(y,v)$ are adjacent if and only if $xy\in E(G)$ and $u=v$ or $x=y$ and $uv\in E(H)$.
For a  vertex  $(x_i, v_j)$ of $G\Box H$, the subgraphs of $G\Box H$ induced by the vertex set $\{(x, v_j): x\in V(G)\}$ and the vertex set $\{(x_i, v): v\in V(H)\}$ are called a $G$-layer and an $H$-layer of $G\Box H$, and denoted by $G^{v_j}$ and $H^{x_i}$, respectively.

For any graph $H$, let $E'$ be the set of edges of all $K_2$-layers of $H\Box K_2$. Clearly, $E'$ is a perfect matching of $H\Box K_2$. Define a bijection $\alpha$ on $V(H\Box K_2)$ as follows: for every edge $uv\in E'$, $\alpha(u):=v$ and $\alpha(v):=u$. Then $\alpha$ is an edge-involution of $H\Box K_2$. So $H\Box K_2$ is  an extremal graph by Theorem \ref{edge-involution method}. This fact inspires us to consider the Cartesian product decomposition of an extremal graph.
Let $\Phi^\ast(G)$ be the number of all the nice perfect matchings of a graph $G$.
We have Theorem \ref{cartesian  iff}. Recall that for an edge $e=uv$ of $G$ and an isomorphism $\varphi$ from $G$ to $H$,  $\varphi(e):=\varphi(u)\varphi(v)$.
\begin{thm}\label{cartesian  iff}
Let $G_1$ and $G_2$ be two simple connected graphs. Then $$\Phi^\ast(G_1\Box G_2)=\Phi^\ast(G_1)+\Phi^\ast(G_2).$$
\end{thm}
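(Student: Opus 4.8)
The plan is to pass from nice perfect matchings to edge-involutions via the one-to-one correspondence of Theorem \ref{edge-involution method}, and then count the edge-involutions of $G_1\Box G_2$ directly in terms of those of $G_1$ and $G_2$. Since $G_1$ and $G_2$ are connected, so is $G_1\Box G_2$, and the Cartesian product of simple graphs is simple, so Theorem \ref{edge-involution method} applies to all three graphs. It therefore suffices to prove that the number of edge-involutions of $G_1\Box G_2$ equals the number of edge-involutions of $G_1$ plus the number of edge-involutions of $G_2$.

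The easy inequality comes from two natural families. Each edge-involution $\beta$ of $G_1$ lifts to $\hat\beta\colon (x,u)\mapsto(\beta(x),u)$, and each edge-involution $\gamma$ of $G_2$ lifts to $\check\gamma\colon (x,u)\mapsto(x,\gamma(u))$; both are automorphisms of order two of $G_1\Box G_2$ whose matching edges are genuine edges ($G_1$-edges and $G_2$-edges respectively), hence edge-involutions. Distinct $\beta$ yield distinct $\hat\beta$ (similarly for $\gamma$), and no $\hat\beta$ equals any $\check\gamma$, since the former fixes every second coordinate while acting nontrivially on the first, and conversely for the latter; equality would force both maps to be the identity, which is not an edge-involution. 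This produces at least $\Phi^\ast(G_1)+\Phi^\ast(G_2)$ edge-involutions.

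The substance of the proof, and the main obstacle, is the reverse inclusion: that every edge-involution $\alpha$ of $G_1\Box G_2$ is of the form $\hat\beta$ or $\check\gamma$. The key is a layer-uniformity lemma. Call the matching edge $\{w,\alpha(w)\}$ a $G_1$-edge or a $G_2$-edge according to which coordinate it changes. Fixing a $G_1$-layer $G_1^u$, I would examine an edge $(x,u)(y,u)$ of that layer and, using that $\alpha$ is an automorphism, compare $\alpha(x,u)$ with $\alpha(y,u)$; a short adjacency case analysis shows the two endpoints cannot be of opposite edge-types, and if both are $G_2$-edges they must shift to a common neighboring layer. Because $G_1$ is connected this propagates across the whole layer, so each $G_1$-layer is mapped either onto itself by a single edge-involution $\beta_u$ of $G_1$ (all its matching edges being $G_1$-edges), or bodily onto one neighboring layer $G_1^{u^\ast}$ (all its matching edges being $G_2$-edges). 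The symmetric statement holds for $G_2$-layers.

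Finally I would glue these together. If some vertex $(x_0,u_0)$ has a $G_1$-edge, then $G_1^{u_0}$ is of the first kind, so $\alpha(x,u_0)=(\beta(x),u_0)$ with $\beta:=\beta_{u_0}$; applying the $G_2$-layer lemma to each column $G_2^{x}$, which contains a $G_1$-edge at height $u_0$, forces $\alpha(x,u)=(\beta(x),u)$ for every $u$, whence $\alpha=\hat\beta$ globally and $\beta$ is an edge-involution of $G_1$. Otherwise every matching edge is a $G_2$-edge and the symmetric argument gives $\alpha=\check\gamma$ for an edge-involution $\gamma$ of $G_2$. Thus the edge-involutions of $G_1\Box G_2$ form exactly the disjoint union of the two families above, and $\Phi^\ast(G_1\Box G_2)=\Phi^\ast(G_1)+\Phi^\ast(G_2)$ follows from Theorem \ref{edge-involution method}. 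The delicate point throughout is the adjacency bookkeeping in the layer lemma, ensuring that no layer can mix the two edge-types and that a shifted layer lands entirely in a single target layer.
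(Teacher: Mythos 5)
Your proof is correct, but it travels a genuinely different route from the paper's. The paper proves the identity entirely on the matching side: it shows the layer-wise products $\rho(M_1)$ and $\sigma(M_2)$ are nice via the adjacency characterization (Theorem \ref{sufficiency and necessary}), and then, in its key Claim, propagates a single matching edge $f$ in one $G_2$-layer through the connectivity of $G_2$ and then of $G_1$ to show any nice perfect matching of the product is such a $\rho(M_1)$ or $\sigma(M_2)$, with identical copies in every layer. You instead pass through the bijection with edge-involutions (Theorem \ref{edge-involution method}) and classify the order-two, every-vertex-to-a-neighbor automorphisms of $G_1\Box G_2$ as lifts $\hat\beta$ or $\check\gamma$; your layer-uniformity lemma is the automorphism-language twin of the paper's Claim, and it does go through: if $\alpha(x,u)=(x',u)$ is a $G_1$-edge while a layer-neighbor has $\alpha(y,u)=(y,u')$, adjacency of the images forces $x'=y$, and then $\alpha^2=\mathrm{id}$ gives $\alpha(y,u)=(x,u)\neq(y,u')$, a contradiction -- note that this exclusion genuinely needs the involution property, not just that $\alpha$ is an automorphism, which is the one step you should spell out rather than leave as ``a short adjacency case analysis.'' Likewise your disjointness of the two lifted families is sound because the identity is not an edge-involution. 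What your route buys is contact with the structure theory of automorphisms of Cartesian products and a one-line kill of the mixed-type case; what the paper's direct matching argument buys is that it never invokes the correspondence theorem and, more importantly, it produces the explicit maps $\rho$ and $\sigma$ in Eq. (\ref{map}), which the paper reuses later (for instance in classifying the nice perfect matchings of $Q_{n,k}$ up to equivalence), whereas your version would need to unwind the bijection to recover them.
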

\begin{proof}
Let $V(G_1)=\{x_1, x_2, \ldots, x_{n_1}\}$ and $V(G_2)=\{v_1, v_2, \ldots, v_{n_2}\}$. Since $\Phi^\ast(K_1)=0$,
we suppose $n_1\geq2$ and $n_2\geq2$.

We define  an isomorphism $\rho_{v_i}$ from $G_1$ to $G_1^{v_i}$ and  an isomorphism $\sigma_{x_j}$ from $G_2$ to $G_2^{x_j}$: $\rho_{v_i}(x):=(x, v_i)$ for any vertex $x$ of $G_1$ and $\sigma_{x_j}(v):=(x_j, v)$ for any vertex $v$ of $G_2$.
For any nice perfect matching $M_i$ of $G_i$, $i=1,2$,  let
\begin{equation}\label{map}\rho(M_1):=\bigcup_{v_i\in V(G_2)}\rho_{v_i}(M_1), ~~~\sigma(M_2):=\bigcup\limits_{x_j\in V(G_1)}\sigma_{x_j}(M_2),
\end{equation}
By Theorem \ref{sufficiency and necessary}, $\rho_{v_i}(M_1)$ is a nice perfect matching of $G_1^{v_i}$ and $\rho(M_1)$ is a nice perfect matching of $G_1\Box G_2$. Similarly, $\sigma(M_2)$ is also a nice perfect matching of $G_1\Box G_2$.

Conversely, since $E(G_1^{v_1})$, $\ldots$, $E(G_1^{v_{n_2}})$, $E(G_2^{x_1}),$ $\ldots$, $E(G_2^{x_{n_1}})$ is a partition of $E(G_1\Box G_2)$, for any nice perfect matching $M$ of $G_1\Box G_2$ there is some $x_i$ or $v_j$ such that $M\cap E(G_2^{x_i})\neq\emptyset$ or $M\cap E(G_1^{v_j})\neq\emptyset$. If $M\cap E(G_2^{x_i})\neq\emptyset$ for some $x_i$, then we have the following Claim.

\textbf{Claim:} $M\cap E(G_2^{x_j})$ is a nice perfect matching of $G_2^{x_j}$ for each $x_j\in V(G_1)$, and  $\sigma_{x_j}^{-1}(M\cap E(G_2^{x_j}))=\sigma_{x_i}^{-1}(M\cap E(G_2^{x_i}))$. So $M\cap E(G_1^{v})=\emptyset$ for each $v\in V(G_2)$.
\begin{figure}[htbp!]
\centering
\includegraphics[height=2.6cm]{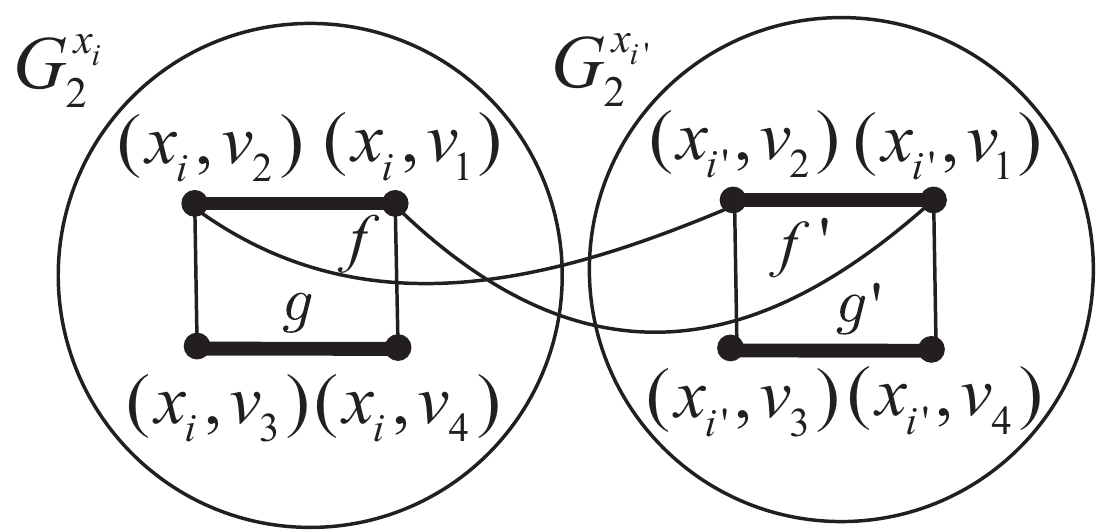}
\caption{\label{Fig.6}{\small Illustration for the proof of the Claim in Theorem \ref{cartesian  iff}.}}
\end{figure}

Take an edge $f=(x_i, v_1)(x_i, v_2)\in M\cap E(G_2^{x_i})$. Then $v_1v_2\in E(G_2)$.
If $n_2=2$, then $M\cap E(G_2^{x_i})=\{f\}$ is a nice perfect matching of $G_2^{x_i}$.
For $n_2\geq 3$, since $G_2$ is connected, without loss of generality we may assume that $d_{G_2}(v_2)\geq 2$.
Let $v_3$ be a neighbor of $v_2$ that is different from $v_1$.
So $(x_i, v_2)(x_i, v_3)\in E(G_2^{x_i})$.
Let $g$ be an edge of $M$ with an end-vertex $(x_i, v_3)$. Since $M$ is a nice perfect matching of $G$,
the other end-vertex of $g$ must be adjacent to $(x_i, v_1)$ by Theorem \ref{sufficiency and necessary}. So the other end-vertex of $g$ belongs to $V(G_2^{x_i})$ (see Fig. \ref{Fig.6}), that is, $g\in E(G_2^{x_i})$.
Since $G_2^{x_i}\cong G_2$ is a connected graph, we can obtain that $M\cap E(G_2^{x_i})$ is a perfect matching of $G_2^{x_i}$ in the above way. So $M\cap E(G_2^{x_i})$ is a nice perfect matching of $G_2^{x_i}$ by Proposition \ref{succession}.

Since $G_1$ is connected and $n_1\geq2$, there is some vertex $x_{i'}$ of $G_1$ such that $x_i$ and $x_{i'}$ are adjacent in $G_1$.
So vertex $(x_{i'}, v_1)\notin G_2^{x_i}$ is adjacent to $(x_i, v_1)$ in $G_1\Box G_2$ (see Fig. \ref{Fig.6}). Let $f'$ be an edge of $M$ that is incident with $(x_{i'}, v_1)$. Since $M$ is a nice perfect matching of $G_1\Box G_2$, the other end-vertex of $f'$ must be adjacent to $(x_i, v_2)$ by Theorem \ref{sufficiency and necessary}.
So $f'=(x_{i'}, v_1)(x_{i'}, v_2)\in M\cap E(G_2^{x_{i'}})$.
As the above proof, we can similarly show that $M\cap E(G_2^{x_{i'}})$ is a nice perfect matching of $G_2^{x_{i'}}$.
Since $G_1$ is connected, in an inductive way we can show that $M\cap E(G_2^{x_j})$ is a nice perfect matching of $G_2^{x_j}$ for any $x_j\in V(G_1)$.

Notice that $\sigma_{x_i}^{-1}(f)=v_1v_2=\sigma_{x_i'}^{-1}(f')$.
Let $g'$ be the edge of $M$ that is incident with $(x_{i'}, v_3)$.
Since $(x_{i'}, v_3)$ is adjacent to $(x_i, v_3)$, the other end vertex of $g'$ must be adjacent to the other end vertex $(x_{i}, v_4)$ of $g$ by Theorem \ref{sufficiency and necessary}.
So $g'=(x_{i'}, v_3)(x_{i'}, v_4)$ since $g'\in E(G_2^{x_{i'}})$.
This implies that $\sigma_{x_{i'}}^{-1}(g')=v_3v_4=\sigma_{x_i}^{-1}(g)$. In an inductive way, we can show that $\sigma_{x_{i'}}^{-1}(M\cap E(G_2^{x_{i'}}))=\sigma_{x_i}^{-1}(M\cap E(G_2^{x_i}))$.
Similarly, we also have $\sigma_{x_j}^{-1}(M\cap E(G_2^{x_j}))=\sigma_{x_i}^{-1}(M\cap E(G_2^{x_i}))$ for any $x_j\in V(G_1)$.

By this Claim, $M_2:=\sigma_{x_i}^{-1}({M\cap E(G_2^{x_i})})$ is a nice perfect matching of $G_2$ with $M=\sigma(M_2)$. If $M\cap E(G_1^{v_j})\neq\emptyset$, then we can similarly show that $G_1$ has a nice perfect matching $M_1$ with $M=\rho(M_1)$. So $\Phi^\ast(G_1\Box G_2)=\Phi^\ast(G_1)+\Phi^\ast(G_2)$.
\end{proof}

In fact, we can get the following corollary.
\begin{cor}\label{cartesian corollary}
Let $G$ be a simple connected graph. Then we have $\Phi^\ast(G)=\sum\limits_{i=1}^{k}\Phi^\ast(G_i)$ for any decomposition $G_1$$\Box\cdots$$\Box G_k$ of $G$.
\end{cor}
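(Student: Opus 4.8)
The plan is to prove Corollary~\ref{cartesian corollary} by induction on $k$, the number of factors in the Cartesian decomposition, using Theorem~\ref{cartesian iff} as the base step. The corollary is the natural generalization of the two-factor case, and the only subtlety is that the associativity of the Cartesian product must be invoked so that a $k$-factor product can be regrouped as a product of two graphs, one of which is itself a $(k-1)$-factor product.

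First I would dispose of the trivial cases: if $k=1$ the statement reads $\Phi^\ast(G)=\Phi^\ast(G_1)$ with $G=G_1$, which is immediate, and the case $k=2$ is exactly Theorem~\ref{cartesian iff}. For the inductive step I would assume the formula holds for every decomposition into fewer than $k$ factors and consider $G=G_1\Box\cdots\Box G_k$. The key structural fact I would use is that the Cartesian product is associative and commutative up to isomorphism, so I may write
\begin{equation}
G=(G_1\Box\cdots\Box G_{k-1})\Box G_k.
\end{equation}
Setting $H:=G_1\Box\cdots\Box G_{k-1}$, the graph $H$ is a simple connected graph (a Cartesian product of connected graphs is connected, and of simple graphs is simple), so Theorem~\ref{cartesian iff} applies to the two-factor product $H\Box G_k$ and gives $\Phi^\ast(G)=\Phi^\ast(H)+\Phi^\ast(G_k)$.

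Then I would apply the induction hypothesis to $H=G_1\Box\cdots\Box G_{k-1}$, which is a decomposition into $k-1$ factors, yielding $\Phi^\ast(H)=\sum_{i=1}^{k-1}\Phi^\ast(G_i)$. Combining the two displayed equalities gives
\begin{equation}
\Phi^\ast(G)=\sum_{i=1}^{k-1}\Phi^\ast(G_i)+\Phi^\ast(G_k)=\sum_{i=1}^{k}\Phi^\ast(G_i),
\end{equation}
which completes the induction. I expect the only point requiring care, and hence the main obstacle, is the invocation of associativity: one must be sure that the hypotheses of Theorem~\ref{cartesian iff} (namely that both factors be simple and connected) are satisfied by the regrouped product, and in particular that $H$ inherits simplicity and connectedness from its factors. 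Apart from verifying these closure properties of the Cartesian product, the argument is a routine telescoping induction and presents no real difficulty.
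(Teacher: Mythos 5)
Your proof is correct and matches the paper's intent exactly: the paper states this corollary without proof as an immediate consequence of Theorem~\ref{cartesian iff}, and the intended argument is precisely your induction on $k$, regrouping $G=(G_1\Box\cdots\Box G_{k-1})\Box G_k$ via associativity and telescoping. Your care in checking that the regrouped factor $H$ is simple and connected (so Theorem~\ref{cartesian iff} applies) is the only non-trivial point, and you handle it properly.
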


Now, it is easy to get the following proposition.
\begin{prop}\label{prime decomposition of G}
A simple connected graph $G$ is an extremal graph if and only if one of its Cartesian product factors is an extremal graph.
\end{prop}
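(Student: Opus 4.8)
The plan is to reduce the statement to the additivity formula already proved. Recall that, by definition, $G$ is extremal exactly when $Af(G)$ attains the bound in Theorem \ref{bound}, i.e.\ exactly when $G$ possesses at least one nice perfect matching. In terms of the counting function this says simply that $G$ is extremal if and only if $\Phi^\ast(G)\ge 1$. Once this translation is in place, the proposition becomes an immediate consequence of Corollary \ref{cartesian corollary}, so the bulk of the work has already been done.

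Concretely, I would fix a Cartesian decomposition $G=G_1\Box\cdots\Box G_k$. Since $G$ is a simple connected graph, each factor $G_i$ is again a simple connected graph, so Corollary \ref{cartesian corollary} applies and yields $\Phi^\ast(G)=\sum_{i=1}^{k}\Phi^\ast(G_i)$, where every summand is a nonnegative integer. For the sufficiency, if some factor $G_j$ is extremal then $\Phi^\ast(G_j)\ge 1$, and nonnegativity of the remaining terms gives $\Phi^\ast(G)\ge\Phi^\ast(G_j)\ge 1$, so $G$ is extremal. For the necessity, if $G$ is extremal then $\sum_{i=1}^{k}\Phi^\ast(G_i)=\Phi^\ast(G)\ge 1$; because the summands are nonnegative integers, at least one $\Phi^\ast(G_j)$ must be positive, and that factor $G_j$ is extremal.

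I do not expect a genuine obstacle here: the entire argument is the observation that a sum of nonnegative integers is positive precisely when one of its terms is positive, combined with the dictionary ``extremal $\Leftrightarrow\Phi^\ast\ge 1$''. The only point meriting a line of care is invoking Corollary \ref{cartesian corollary} correctly, namely checking that its hypotheses (simple connected factors) are met by any Cartesian decomposition of a connected $G$, which holds automatically.
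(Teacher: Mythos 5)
Your proof is correct and matches the paper's intended argument exactly: the paper states this proposition without a written proof, presenting it as an immediate consequence of Corollary \ref{cartesian corollary} via precisely your dictionary ``extremal $\Leftrightarrow \Phi^\ast(G)\geq 1$'' and the observation that a sum of nonnegative integers is positive iff some summand is. Nothing needs fixing.
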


The \emph{$n$-dimensional enhanced hypercube} $Q_{n, k}$, see \cite{1991}, is the graph with vertex set $V(Q_{n, k})$$=V(Q_n)$ and edge set $E(Q_{n, k})=E(Q_n)\cup \{(x_1x_2\cdots x_{n-1}x_n, \bar{x}_1\bar{x}_2\cdots\bar{x}_{n-k-1}$$\bar{x}_{n-k}$$x_{n-k+1}$
$x_{n-k+2}$$\cdots$$x_n: x_1x_2\cdots x_n\in V(Q_{n, k})\}$, where $0\leq k\leq n-1$. Clearly, $Q_n\cong Q_{n, n-1}$ and
$FQ_n\cong Q_{n, 0}$, i.e., the hypercube and the
folded hypercube are regarded as two special cases of the
enhanced hypercube.  By \cite{2015}, we have  $Q_{n, k}\cong FQ_{n-k}\Box Q_k$, for $0\leq k\leq n-1$. Hence we obtain the following result by the Proposition \ref{prime decomposition of G}.
\begin{cor}
$Q_{n, k}$ is an extremal graph and $Af(Q_{n, k})=n2^{n-2}$.
\end{cor}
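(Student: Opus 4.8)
The plan is to read this off the two structural results immediately preceding it, using the isomorphism $Q_{n,k}\cong FQ_{n-k}\Box Q_k$ (valid for $0\le k\le n-1$) cited from \cite{2015}. The whole statement splits into two independent claims: that $Q_{n,k}$ is extremal, and that its maximum anti-forcing number equals $n2^{n-2}$. The first is a pure corollary-chain and the second is an edge count.

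First I would establish extremality. Corollary \ref{FQ_n extremal} tells us that $FQ_{n-k}$ is an extremal graph, and $FQ_{n-k}$ is one of the two Cartesian factors in the decomposition $Q_{n,k}=FQ_{n-k}\Box Q_k$. Proposition \ref{prime decomposition of G} says a simple connected graph is extremal if and only if one of its Cartesian factors is extremal; applying it to this factorization yields at once that $Q_{n,k}$ is extremal. Both factors $FQ_{n-k}$ and $Q_k$ are simple and connected, so the hypotheses of Proposition \ref{prime decomposition of G} (and of Corollary \ref{FQ_n extremal}) are met in the intended range of $k$.

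Second, once extremality is known, the value of $Af(Q_{n,k})$ is forced by the equality case of Theorem \ref{bound}: for an extremal graph $G$ one has $Af(G)=\frac{2e(G)-v(G)}{4}$. So I only need $v(Q_{n,k})$ and $e(Q_{n,k})$. Clearly $v(Q_{n,k})=2^n$. Each vertex carries its $n$ hypercube edges together with the single complementary-type edge obtained by flipping the first $n-k$ coordinates; these complementary edges form a perfect matching of size $2^{n-1}$ that is edge-disjoint from $E(Q_n)$, so $Q_{n,k}$ is $(n+1)$-regular and $e(Q_{n,k})=(n+1)2^{n-1}$. Substituting gives $Af(Q_{n,k})=\frac{2(n+1)2^{n-1}-2^n}{4}=\frac{n\cdot 2^n}{4}=n2^{n-2}$.

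Since the argument is essentially the composition of earlier results, there is no serious obstacle. The one point demanding care is the edge count, that is, verifying that the complementary edges are genuinely disjoint from the hypercube edges so that the graph is $(n+1)$-regular; this is exactly the step where the degenerate endpoint $k=n-1$ must be excluded, as there $Q_{n,n-1}$ collapses to $Q_n$, the extra matching coincides with the $1$-edges, the regularity drops to $n$, and the formula correctly reverts to the value $(n-1)2^{n-2}$ of Lemma \ref{Af(Q_n)}.
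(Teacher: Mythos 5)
Your proposal is correct and follows the paper's own route exactly: the paper likewise derives the corollary from the decomposition $Q_{n,k}\cong FQ_{n-k}\Box Q_k$ together with Proposition \ref{prime decomposition of G} and Corollary \ref{FQ_n extremal}, with the value $n2^{n-2}$ read off from the extremal formula $\frac{2e(G)-v(G)}{4}$. Your explicit verification of the edge count $e(Q_{n,k})=(n+1)2^{n-1}$ and your remark that the endpoint $k=n-1$ must be excluded (where $Q_{n,n-1}\cong Q_n$ and the value reverts to $(n-1)2^{n-2}$) are details the paper leaves implicit, and the latter is a genuinely sharper reading of the corollary's stated range.
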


According to the above discussion, for any graph $G$, we know that $K_{m, m}\Box G$, $K_{2n}\Box G$, $Q_n\Box G$,  $FQ_n\Box G$ and $Q_{n, k}\Box G$ are extremal graphs. Moreover, we can produce an infinite number of extremal graphs from an extremal graph by the Cartesian product operation.
\section{Further applications}
From examples we already know that $K_{m, m}$, $K_{2n}$, $Q_n$, $FQ_n$ and $Q_{n, k}$ are extremal graphs. Two perfect matchings $M_1$ and $M_2$ of a graph $G$ are called \emph{equivalent} if there is an automorphism $\varphi$ of $G$ such that $\varphi(M_1)=M_2$. So we know that all the perfect matchings of $K_{m, m}$ (or $K_{2n}$) are nice and equivalent. Further in this section we will count  nice perfect matchings of the three cube-like graphs.
\begin{thm}\label{n_nicepm}
$Q_n$ has exactly $n$ nice perfect matchings $E_1, E_2, \ldots, E_n$, all of which are equivalent.
\end{thm}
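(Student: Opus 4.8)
The plan is to establish the two assertions of the theorem—that there are exactly $n$ nice perfect matchings and that they are pairwise equivalent—separately, leaning on the bijection with edge-involutions supplied by Theorem \ref{edge-involution method}. By Lemma \ref{Af(Q_n)} each $\Theta_{Q_n}$-class $E_i$ is already a nice perfect matching, and the $E_1,\dots,E_n$ are visibly distinct, so the real content is to show that no other nice perfect matching exists and that all $n$ of them lie in one equivalence class.

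For the equivalence, I would use the automorphisms of $Q_n$ that permute coordinate directions: the map sending each vertex $x_1\cdots x_n$ to the string with its $i$-th and $j$-th coordinates interchanged is an automorphism of $Q_n$ carrying the class $E_i$ onto $E_j$. Hence the matchings $E_1,\dots,E_n$ form a single orbit under $\mathrm{Aut}(Q_n)$ and are pairwise equivalent.

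For the count, I would invoke Theorem \ref{edge-involution method} to translate the problem into counting edge-involutions of $Q_n$. First I observe that $E_i$ corresponds under $M\mapsto\alpha_M$ to the involution $\alpha_i$ that complements the $i$-th coordinate of every vertex. Now let $\alpha$ be an arbitrary edge-involution. Since $v$ and $\alpha(v)$ are adjacent, they differ in exactly one coordinate, so there is a function $j$ on $V(Q_n)$ such that $\alpha(v)$ is $v$ with its $j(v)$-th coordinate complemented. The aim is to prove that $j$ is constant; once this is done, $\alpha=\alpha_{j}$ for the common value $j$, and the one-to-one correspondence forces the list of nice perfect matchings to be exactly $E_1,\dots,E_n$.

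The hard part will be showing $j$ is constant, and I expect to achieve it by playing the automorphism property off against the involution property. Fix a vertex $v$ and a neighbour $w$ obtained from $v$ by complementing a coordinate $k\neq j(v)$. Because $\alpha$ is an automorphism and $v$ is adjacent to $w$, the images $\alpha(v)$ and $\alpha(w)$ must be adjacent; computing the coordinatewise ($\bmod\,2$) difference of $\alpha(v)$ and $\alpha(w)$, which equals a sum of the three unit vectors in positions $j(v)$, $k$, $j(w)$, shows that adjacency forces $j(w)\in\{j(v),k\}$. The involution property then excludes $j(w)=k$: if $j(w)=k$, then applying the complementation in coordinate $k$ to $w$ returns to $v$, so $\alpha(w)=v$, whence $\alpha(v)=w$ and $k=j(v)$, contradicting $k\neq j(v)$. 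Therefore $j(w)=j(v)$ for every neighbour $w$ of $v$ other than $\alpha(v)$, while the remaining neighbour $\alpha(v)$ satisfies $j(\alpha(v))=j(v)$ immediately from $\alpha^2=\mathrm{id}$. Since $Q_n$ is connected, $j$ is constant, which completes the enumeration and hence the proof.
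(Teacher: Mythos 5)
Your proof is correct, and while your equivalence half coincides with the paper's (the coordinate-transposition automorphisms $f_{ij}$ satisfying $f_{ij}(E_i)=E_j$), your counting half takes a genuinely different route. The paper counts in one line: since $Q_n\cong K_2\Box\cdots\Box K_2$ and $\Phi^\ast(K_2)=1$, the additivity of $\Phi^\ast$ over Cartesian factors (Corollary \ref{cartesian corollary}) gives $\Phi^\ast(Q_n)=n$ immediately. You instead translate via the bijection of Theorem \ref{edge-involution method} and classify the edge-involutions of $Q_n$ from scratch: writing $\alpha(v)=v+e_{j(v)}$ coordinatewise mod $2$, adjacency of $\alpha(v)$ and $\alpha(w)$ for a neighbour $w=v+e_k$ with $k\neq j(v)$ forces $e_{j(v)}+e_k+e_{j(w)}$ to have Hamming weight one, hence $j(w)\in\{j(v),k\}$; the relation $\alpha^2=\mathrm{id}$ kills the case $j(w)=k$ (if $j(w)=k$ then $\alpha(w)=v$, so $\alpha(v)=w$ and $k=j(v)$, a contradiction) and also gives $j(\alpha(v))=j(v)$ directly; connectivity of $Q_n$ then makes $j$ constant, so every edge-involution is a coordinate complementation $\alpha_i$ and the nice perfect matchings are exactly $E_1,\dots,E_n$. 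This is watertight. It is worth noting that the paper itself remarks, right after its proof, that the theorem can be obtained via (reversing) involutions ``but the computation is tedious''; your argument shows the involution route is in fact short and self-contained. What each approach buys: the paper's route amortizes the work into the general Cartesian-product theorem, which is then reused to handle $Q_{n,k}$ and makes this theorem an immediate corollary, whereas your route avoids the lengthy Claim in the proof of that theorem, needs only the edge-involution correspondence, and as a by-product pins down a structural fact of independent interest, namely that the only edge-involutions of $Q_n$ are the $n$ coordinate complementations.
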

\begin{proof}
By Lemma \ref{Af(Q_n)}, $E_1, E_2, \ldots, E_n$ are $n$ distinct nice perfect matchings of $Q_n$. Since $Q_n$ is the Cartesian product of $n$ $K_2$'s, $Q_n$ has exactly $n$ nice perfect matchings by Corollary \ref{cartesian corollary}. So the first part is done. Now, it remains to show that $E_i$ and $E_j$ are equivalent for any $1\leq i<j\leq n$. Let the automorphism $f_{ij}$ of $Q_n$ be defined as $f_{ij}(x_1\cdots x_{i-1}x_ix_{i+1}\dots x_{j-1}x_jx_{j+1}$$\cdots x_n)=x_1\cdots x_{i-1}x_jx_{i+1}\dots x_{j-1}x_ix_{j+1}\cdots x_n$ for each vertex $x_1x_2\cdots x_n$ of $Q_n$. Then $f_{ij}(E_i)=E_j$.
\end{proof}

The theorem can be obtained  by applying the reversing-involutions of bipartite graphs, see \cite{GA}, but the computation is tedious.

Since  $FQ_2\cong K_4$ and $FQ_3\cong K_{4, 4}$, we have $\Phi^\ast(FQ_2)=3$ and $\Phi^\ast(FQ_3)=24$.
For $n\geq4$, we have a general result as follows.
\begin{thm}\label{nicepm_FQ_n}
$FQ_n$ has exactly $n+1$ nice perfect matchings for $n\geq4$.
\end{thm}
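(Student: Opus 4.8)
The plan is to work directly with the characterization in Theorem \ref{sufficiency and necessary}, viewing the vertices of $FQ_n$ as elements of $\mathbb{F}_2^n$ so that two vertices $a,b$ are adjacent exactly when $a+b$ lies in the connection set $C:=\{e_1,\dots,e_n,\mathbf 1\}$, where $e_i$ is the $i$-th unit vector and $\mathbf 1=e_1+\cdots+e_n$. First I would record the $n+1$ obvious candidates: the $\Theta_{Q_n}$-classes $E_1,\dots,E_n$ (nice by Lemma \ref{Af(Q_n)} together with Corollary \ref{FQ_n extremal}) and the complementary matching $\bar E=\{x\bar x\}$. Each is a \emph{constant-displacement} matching $E_c:=\{\{v,v+c\}:v\in\mathbb F_2^n\}$ for a fixed $c\in C$, and a one-line computation with Theorem \ref{sufficiency and necessary} shows every such $E_c$ is nice: once the matched displacement is constant, the two adjacency equivalences in that theorem become literally identical. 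These $n+1$ matchings are pairwise distinct, so $FQ_n$ has at least $n+1$ nice perfect matchings.

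The core of the argument is the converse: every nice perfect matching is some $E_c$. Given a nice $M$, assign to each vertex $v$ its \emph{displacement} $s(v)\in C$, where $\{v,v+s(v)\}$ is the edge of $M$ at $v$. I would fix a vertex $x$, set $d:=s(x)$, and test the condition of Theorem \ref{sufficiency and necessary} on the pair of $M$-edges at $x$ and at a neighbour $u=x+c$ with $c\in C$, $c\ne d$. Since $x\sim u$, niceness forces $(x+d)\sim(u+s(u))$, that is, $d+c+s(u)\in C$. Here the hypothesis $n\ge4$ enters through the key arithmetic fact that a sum of three elements of $C$ can lie in $C$ only if two of the three already coincide: if $c_1+c_2+c_3=c_4$ with $c_1,c_2,c_3$ pairwise distinct, then $c_4$ is distinct from all of them and we obtain four distinct members of $C$ summing to $0$, which is impossible because the only linear relation among the elements of $C$ is $\sum_{c\in C}c=0$, of length $n+1\ge5$. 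Applying this to $\{d,c,s(u)\}$ with $d\ne c$ gives $s(u)\in\{d,c\}$; and $s(u)=c$ is excluded because it would match $u=x+c$ back to $x$, contradicting $s(x)=d\ne c$. Hence $s(u)=d$ for every neighbour $u$ of $x$.

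Thus $s$ is constant on the closed neighbourhood of $x$; since $FQ_n$ is connected, propagating this equality along edges yields $s\equiv d$, so $M=E_d$. Combined with the first paragraph, the nice perfect matchings of $FQ_n$ are exactly $E_1,\dots,E_n,\bar E$, which proves that $FQ_n$ has exactly $n+1$ of them for $n\ge4$.

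I expect the main obstacle to be pinning down the arithmetic step cleanly, namely verifying that for $n\ge4$ no four distinct members of $C$ sum to $0$ (equivalently, that any four distinct elements of $C$ are linearly independent, which fails once $\mathbf 1$ is forced into the span of three units for $n\le3$), and then carefully eliminating the degenerate coincidence $s(u)=c$ using the matching structure. This is also precisely where $n\ge4$ is essential: the small cases $FQ_2\cong K_4$ and $FQ_3\cong K_{4,4}$ violate the arithmetic and carry many more nice perfect matchings, so the threshold $n\ge4$ must be respected throughout.
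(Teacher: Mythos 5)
Your proof is correct and is essentially the paper's argument recast in Cayley-graph language over $\mathbb{F}_2^n$: both verify the $n+1$ candidates through Theorem \ref{sufficiency and necessary} and then prove uniqueness by applying that theorem to the $M$-edges at a vertex and at each of its neighbours, forcing equal displacements and propagating by connectivity, and your lemma that no four distinct elements of $C=\{e_1,\dots,e_n,\mathbf 1\}$ sum to zero for $n\geq4$ is precisely the algebraic form of the paper's two case analyses, each of which forces the contradiction $n=3$. The only cosmetic difference is that you treat the complementary direction on the same footing as the coordinate directions, whereas the paper first selects a non-complementary class $E_k$ meeting $M$ and propagates inside a component of $FQ_n-(E_{n+1}\cup E_k)$.
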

\begin{proof}
By Lemma \ref{Af(Q_n)}, $E_i$ is a perfect matching of $Q_n$. Then $E_i$ is also a perfect matching of $FQ_n$. We can easily check that $E_i$ is a nice perfect matching of $FQ_n$ by Theorem \ref{sufficiency and necessary}.

Let $E_{n+1}$ be the set of all the complementary edges of $FQ_n$. Then $E_{n+1}$ is a perfect matching of $FQ_n$. Let $u\bar{u}$ and $v\bar{v}$ be two distinct edges in $E_{n+1}$. Since any two distinct complementary edges are independent, the edge linked $u$ to $v$ or $\bar{v}$ (if exist) does not belong to $E_{n+1}$.
We can easily show that $uv\in E_j$ if and only if $\bar{u}\bar{v}\in E_j$ for some $j=1, 2, \ldots, n$, and $u\bar{v}\in E_s$ if and only if $\bar{u}v\in E_s$ for some $s=1, 2, \ldots, n$. So $E_{n+1}$ is also a nice perfect matching of $FQ_n$.

Now, we have found $n+1$ nice perfect matchings of $FQ_n$. Next, we will show that $FQ_n$ has no other nice perfect matchings.
By the contrary, we suppose that $M$ is a nice perfect matching of $FQ_n$ that is different from any $E_i$, $i=1, 2, \ldots, n+1$. Since $E_1, \ldots, E_{n+1}$ is a partition of the edge set $E(FQ_n)$, there is $E_k$ with $k\neq n+1$ such that $M\cap E_k\neq\emptyset$ and $E_k\neq M$. Clearly,  $FQ_n-(E_{n+1}\cup E_k)$ has exactly two components both of which are isomorphic to $Q_{n-1}$.
We notice that the $k$-th coordinate of each vertex in one component is $0$, and $1$ in the other component.
We denote the two components by $Q_n^{0}$ and $Q_n^{1}$, respectively.
In fact, $V(Q_n^{i})=\{x_1\cdots x_{k-1}ix_{k+1}\cdots x_n: x_j=0~\text{or}~1, j=1, \ldots, k-1, k+1, \ldots, n\}$, $i=0, 1$.
Since $M\cap E_{k}\neq\emptyset$, there is some edge $vv'\in M\cap E_k$ with $v\in V(Q_n^{0})$ and $v'\in V(Q_n^{1})$.
For any vertex $w$ of $Q_n^{0}$ with $w$ and $v$ being adjacent, we consider the edge $g$ of $M$ that is incident with $w$.
By Theorem \ref{sufficiency and necessary}, the other end-vertex of $g$ is adjacent to $v'$.
If $g=w\bar{w}$ is a complementary edge of $FQ_n$, then there are exactly two same bits in the strings of $\bar{w}$ and $v'$. So the edge $\bar{w}v'\in E(FQ_n)$ is not a complementary edge of $FQ_n$.
Since $\bar{w}$ and $v'$ are adjacent, there is exactly one different bit in the strings of $\bar{w}$ and $v'$.
So $n=3$, a contradiction. If $g=wz\in E(Q_n^{0})$, then there are exactly three different bits in the strings of $z$ and $v'$.
Since $z$ and $v'$ are adjacent in $FQ_n$, the edge $zv'$ is a complementary edge of $FQ_n$. So $n=3$, a contradiction. Hence $g\in E_k$. Since $Q_n^{0}$ is connected, using the above method repeatedly, we can show that $M=E_k$, a contradiction. So $FQ_n$ has exactly $n+1$ nice perfect matchings.
\end{proof}

\begin{prop}\label{FQ_n_equ_nicepm}
All the nice perfect matchings of $FQ_n$ $(n\geq2)$ are equivalent.
\end{prop}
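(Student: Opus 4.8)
The plan is to lean on the explicit list of nice perfect matchings supplied by Theorem \ref{nicepm_FQ_n}. First I would dispose of the two small cases separately: since $FQ_2\cong K_4$ and $FQ_3\cong K_{4,4}$, and we have already observed that all perfect matchings of $K_{2n}$ and of $K_{m,m}$ are nice and pairwise equivalent, the proposition is immediate for $n=2,3$. The substance therefore lies in the range $n\geq4$, where the nice perfect matchings are precisely the $n$ coordinate classes $E_1,\dots,E_n$ together with the complementary class $E_{n+1}$, and the goal becomes showing that $\mathrm{Aut}(FQ_n)$ acts transitively on this $(n+1)$-element set.

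The second step settles the coordinate classes among themselves. For $1\le i<j\le n$ the coordinate transposition $f_{ij}$ from Theorem \ref{n_nicepm} (which swaps the $i$-th and $j$-th bits of every vertex) is an automorphism of $Q_n$ with $f_{ij}(E_i)=E_j$. Since swapping two coordinates commutes with complementation, $f_{ij}(\bar x)=\overline{f_{ij}(x)}$, so $f_{ij}$ maps complementary edges to complementary edges and is also an automorphism of $FQ_n$ fixing $E_{n+1}$. Hence $E_1,\dots,E_n$ are already pairwise equivalent in $FQ_n$.

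The crux, and the step I expect to be the main obstacle, is to exhibit an automorphism interchanging $E_{n+1}$ with a coordinate class, say $E_1$; this must turn the ``flip-all-coordinates'' direction into a single-coordinate direction. I would construct it as the explicit involution $\varphi$ on $V(FQ_n)=\{0,1\}^n$ given by $\varphi(x)_1:=x_1$ and $\varphi(x)_j:=x_1\oplus x_j$ for $2\le j\le n$. A direct computation shows $\varphi\circ\varphi=\mathrm{id}$, so $\varphi$ is a bijection, and then one checks that it respects every edge type: a $1$-edge (flipping coordinate $1$) is sent to a pair differing in all $n$ coordinates, i.e.\ a complementary edge; a complementary edge (flipping all coordinates) is sent to a pair differing only in coordinate $1$, i.e.\ a $1$-edge; and an $i$-edge with $i\ge2$ is sent to another $i$-edge. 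Thus $\varphi$ is an automorphism of $FQ_n$ with $\varphi(E_1)=E_{n+1}$, $\varphi(E_{n+1})=E_1$, and $\varphi(E_i)=E_i$ for $2\le i\le n$.

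Combining the two steps, the automorphisms $f_{ij}$ and $\varphi$ place $E_1,\dots,E_{n+1}$ in a single orbit of $\mathrm{Aut}(FQ_n)$, and since equivalence of perfect matchings is an equivalence relation this yields pairwise equivalence of all $n+1$ nice perfect matchings. The only point demanding genuine care is the edge-by-edge verification that $\varphi$ preserves $FQ_n$; conceptually this is the statement that the underlying $\mathbb{F}_2$-linear map, sending $e_1\mapsto\mathbf 1$, $e_j\mapsto e_j$ $(j\ge2)$, and hence $\mathbf 1\mapsto e_1$, permutes the set of edge directions $\{e_1,\dots,e_n,\mathbf 1\}$, and it is exactly this transposition of $\mathbf 1$ and $e_1$ that realizes the equivalence of the complementary matching with a coordinate matching.
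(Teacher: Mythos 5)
Your proposal is correct and follows essentially the same route as the paper: dispose of $n=2,3$ via $FQ_2\cong K_4$ and $FQ_3\cong K_{4,4}$, use the coordinate transpositions $f_{ij}$ (which indeed extend to $FQ_n$) to equate $E_1,\dots,E_n$, and exhibit one explicit automorphism exchanging $E_{n+1}$ with $E_1$. Your linear involution $\varphi(x)=(x_1,\,x_1\oplus x_2,\dots,x_1\oplus x_n)$ differs from the paper's map $f$ (which sends $x\mapsto \bar{x}_1x_2\cdots x_n$ when $x_1=0$ and $x\mapsto\bar{x}_1\cdots\bar{x}_n$ when $x_1=1$) only by composition with the translation $x\mapsto x+e_1$, which fixes every edge class, so the two constructions are essentially identical — with yours having the mild aesthetic advantage of being $\mathbb{F}_2$-linear and an involution.
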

\begin{proof}
We notice that $FQ_2\cong K_4$ and $FQ_3\cong K_{4, 4}$. So all the nice perfect matchings of $FQ_n$ are equivalent for $2\leq n\leq3$. Suppose that $n\geq4$.
From the proof of Theorem \ref{nicepm_FQ_n} we know that $E_1$, $E_2$, $\ldots,$ $E_{n+1}$ are all the nice perfect matchings of $FQ_n$. $f_{ij}$ defined in the proof of Theorem \ref{n_nicepm} is also an automorphism of $FQ_n$
such that $\varphi(E_i)=E_j$ for  $1\leq i<j\leq n$.
We will show that $E_1$ and $E_{n+1}$ are equivalent. Clearly, $FQ_n-(E_1\cup E_{n+1})$ has exactly two components each  isomorphic to $Q_{n-1}$, denoted by $Q_n^{0}$ and $Q_n^{1}$. Set $V(Q_n^i)=\{ix_2x_3\cdots x_n: x_j=0~\text{or}~1, j=2, \ldots, n\}$, $i=0, 1$.
We define a bijection $f$ on $V(FQ_n)$ as follows:
\begin{equation}
f(x_1x_2\cdots x_n)=
\begin{cases}
\bar{x}_1x_2\cdots x_n, & \text{if $x_1x_2\cdots x_n\in V(Q_n^{0})$,}\nonumber\\
\bar{x}_1\bar{x}_2\cdots \bar{x}_n, & \text{if $x_1x_2\cdots x_n\in V(Q_n^{1})$}.
\end{cases}
\end{equation}
It is easy to check that $f$ is an automorphism of $FQ_n$. In addition, $f(E_1)=E_{n+1}$. Hence all the nice perfect matchings of $FQ_n$ are equivalent.
\end{proof}

By Corollary \ref{cartesian corollary} and Theorems \ref{n_nicepm} and \ref{nicepm_FQ_n}, we can  obtain the following conclusion.
\begin{cor}
$\Phi^\ast(Q_{n, n-1})=n$,
$\Phi^\ast(Q_{n, n-2})=n+1$, $\Phi^\ast(Q_{n, n-3})=n+21$ and $\Phi^\ast(Q_{n, k})=n+1$ for any $0\leq k\leq n-4$.
\end{cor}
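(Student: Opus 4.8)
The plan is to reduce every value in the statement to the additive behaviour of nice perfect matchings under Cartesian products (Corollary \ref{cartesian corollary}), feeding it with the decomposition $Q_{n,k}\cong FQ_{n-k}\Box Q_k$ of \cite{2015}. First I would record the base value $\Phi^\ast(K_2)=1$: the single edge is the unique perfect matching $M$ of $K_2$, and it is nice because $af(K_2,M)=0=\frac{2e(K_2)-v(K_2)}{4}$, so both equalities in (\ref{upper}) hold. Since $Q_k\cong K_2\Box\cdots\Box K_2$ with $k$ factors, this base value together with Corollary \ref{cartesian corollary} gives $\Phi^\ast(Q_k)=k$.

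Next I would combine the isomorphism $Q_{n,k}\cong FQ_{n-k}\Box Q_k$ with Corollary \ref{cartesian corollary} to obtain the master identity
\[
\Phi^\ast(Q_{n,k})=\Phi^\ast(FQ_{n-k})+\Phi^\ast(Q_k)=\Phi^\ast(FQ_{n-k})+k .
\]
It then remains to insert the correct value of $\Phi^\ast(FQ_{n-k})$ in each regime. For $k=n-2$ and $k=n-3$ the relevant folded hypercubes are $FQ_2\cong K_4$ and $FQ_3\cong K_{4,4}$, whose counts $\Phi^\ast(FQ_2)=3$ and $\Phi^\ast(FQ_3)=24$ were already determined; the master identity then yields $\Phi^\ast(Q_{n,n-2})=3+(n-2)=n+1$ and $\Phi^\ast(Q_{n,n-3})=24+(n-3)=n+21$. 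For $0\le k\le n-4$ we have $n-k\ge 4$, so Theorem \ref{nicepm_FQ_n} gives $\Phi^\ast(FQ_{n-k})=n-k+1$, whence $\Phi^\ast(Q_{n,k})=(n-k+1)+k=n+1$.

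The line $k=n-1$ must be treated apart, and this is the only delicate point. Here the putative factor would be $FQ_1$, which is degenerate, so the Cartesian-factorization argument cannot be applied through it. Instead I would invoke $Q_{n,n-1}\cong Q_n$ directly and apply Theorem \ref{n_nicepm}, giving $\Phi^\ast(Q_{n,n-1})=n$. Thus the genuine care needed is boundary bookkeeping rather than any substantive obstacle: one must route the top case $k=n-1$ through the hypercube itself, and one must use the precomputed small values $\Phi^\ast(FQ_2)=3$ and $\Phi^\ast(FQ_3)=24$ in place of the formula $\Phi^\ast(FQ_m)=m+1$, which Theorem \ref{nicepm_FQ_n} only guarantees for $m\ge 4$.
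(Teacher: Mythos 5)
Your proposal is correct and follows exactly the route the paper intends: the paper derives this corollary in one line from $Q_{n,k}\cong FQ_{n-k}\Box Q_k$, the additivity in Corollary \ref{cartesian corollary}, Theorems \ref{n_nicepm} and \ref{nicepm_FQ_n}, and the precomputed values $\Phi^\ast(FQ_2)=3$ and $\Phi^\ast(FQ_3)=24$. Your only additions---verifying $\Phi^\ast(K_2)=1$ and routing the $k=n-1$ case through $Q_{n,n-1}\cong Q_n$ to avoid the degenerate factor $FQ_1$---are sound bookkeeping consistent with the paper's argument.
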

\begin{prop}
For $0<k<n-1$, $Q_{n, k}$ has exactly two nice perfect matchings up to the equivalent.
\end{prop}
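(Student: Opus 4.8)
The plan is to exploit the Cartesian decomposition $Q_{n,k}\cong FQ_{n-k}\Box Q_k$ recalled above, together with $Q_k\cong\underbrace{K_2\Box\cdots\Box K_2}_{k}$, so that $Q_{n,k}\cong FQ_{n-k}\Box\underbrace{K_2\Box\cdots\Box K_2}_{k}$; here the hypothesis $0<k<n-1$ gives $k\ge 1$ and $n-k\ge 2$. By Corollary \ref{cartesian corollary} and the ``one factor at a time'' description of nice perfect matchings in the proof of Theorem \ref{cartesian  iff}, every nice perfect matching of $Q_{n,k}$ is induced by a nice perfect matching of exactly one prime factor. I would sort these into two families: the family $\mathcal{A}$ induced by the nice perfect matchings of $FQ_{n-k}$ (each supported entirely on edges of $FQ_{n-k}$-layers), and the family $\mathcal{B}$ of the $k$ matchings induced by the nice perfect matchings of $Q_k$, i.e.\ $E_{n-k+1},\dots,E_n$, each supported on the edges of a single $K_2$-factor direction. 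Both families are nonempty because $FQ_{n-k}$ and $Q_k$ each possess a nice perfect matching.

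Next I would prove equivalence inside each family. For $\mathcal{A}$, Proposition \ref{FQ_n_equ_nicepm} yields, for any two nice perfect matchings $M_1,M_1'$ of $FQ_{n-k}$, an automorphism $\beta$ of $FQ_{n-k}$ with $\beta(M_1)=M_1'$; extending $\beta$ by the identity on the $Q_k$-factor gives an automorphism of $Q_{n,k}$ sending the matching induced by $M_1$ to that induced by $M_1'$. For $\mathcal{B}$, the coordinate transpositions $f_{ij}$ with $n-k+1\le i<j\le n$ from the proof of Theorem \ref{n_nicepm} act only inside the $Q_k$-block and fix every complementary edge, hence are automorphisms of $Q_{n,k}$, and they carry $E_i$ to $E_j$. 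Thus $\mathcal{A}$ and $\mathcal{B}$ are each a single equivalence class.

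The heart of the argument is that no automorphism of $Q_{n,k}$ sends a matching of $\mathcal{A}$ to one of $\mathcal{B}$. For this I would appeal to the uniqueness of the prime Cartesian factorization and the resulting description of the automorphism group of a Cartesian product (Sabidussi--Vizing). Since $FQ_{n-k}$ is prime --- by the paper's result for $n-k\ge 4$, and by a direct check that $FQ_2\cong K_4$ and $FQ_3\cong K_{4,4}$ admit no nontrivial Cartesian factorization --- the list $FQ_{n-k},K_2,\dots,K_2$ is the prime factorization of $Q_{n,k}$, in which $FQ_{n-k}$ is the unique factor not isomorphic to $K_2$ (as $2^{n-k}\ge 4$). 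Hence every automorphism preserves the $FQ_{n-k}$-factor and merely permutes the $k$ isomorphic $K_2$-factors; equivalently it carries $FQ_{n-k}$-layer edges to $FQ_{n-k}$-layer edges and $K_2$-direction edges to $K_2$-direction edges. As matchings in $\mathcal{A}$ use only the former and matchings in $\mathcal{B}$ only the latter, the two families cannot be interchanged, and $Q_{n,k}$ has exactly the two equivalence classes $\mathcal{A}$ and $\mathcal{B}$.

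The main obstacle I anticipate is precisely this last step: excluding a ``mixing'' automorphism that interchanges the folded-hypercube direction with the hypercube directions. A purely local invariant is unattractive here because the parity of $n-k$ governs whether $FQ_{n-k}$ is bipartite, so the cleanest route is the rigidity supplied by prime factorization; the only delicate point is confirming the primality of $FQ_{n-k}$ in the small cases $n-k=2,3$.
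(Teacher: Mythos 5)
Your proposal is correct, and its first two steps coincide with the paper's own proof: the paper likewise uses $Q_{n,k}=FQ_{n-k}\Box Q_k$ and the proof of Theorem \ref{cartesian  iff} to split the nice perfect matchings into the two classes $\mathcal{M}'$ (induced from nice perfect matchings of $FQ_{n-k}$) and $\mathcal{M}''$ (induced from those of $Q_k$), and it proves equivalence within each class by extending an automorphism of one factor by the identity on the other, exactly as you do. The genuine divergence is in the non-mixing step. The paper stays elementary and self-contained: it chooses a set $S=\{e_1,\ldots,e_{n-k}\}\subseteq\rho(F_1)$ whose incident vertices induce a subgraph $H$ with $H-S$ connected, and notes that for \emph{every} $(n-k)$-subset $R\subseteq\sigma(E_1)$ the subgraph induced by the vertices incident with $R$ becomes disconnected after deleting $R$, because $\sigma(E_1)$ is exactly the edge cut $E(A,B)$ between the two components $A,B\cong FQ_{n-k}\Box Q_{k-1}$ of $Q_{n,k}-\sigma(E_1)$; since connectivity of such an induced configuration is preserved by automorphisms, $\psi(S)\neq R$ for every automorphism $\psi$, so $\rho(F_1)$ and $\sigma(E_1)$ are not equivalent. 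You instead invoke unique prime factorization together with the Imrich--Miller description of automorphisms of connected Cartesian products (resting on Sabidussi--Vizing, and available in \cite{Handbook}, which the paper already cites): since $FQ_{n-k}$ is prime --- the paper's final theorem gives this, with your direct checks $FQ_2\cong K_4$ and $FQ_3\cong K_{4,4}$ correctly covering the small cases --- and $FQ_{n-k}\not\cong K_2$ because $k<n-1$ forces $n-k\geq2$, every automorphism preserves the $FQ_{n-k}$-direction edges and merely permutes the $K_2$-directions, which separates your families $\mathcal{A}$ and $\mathcal{B}$ at once. Your route is conceptually cleaner and proves something strictly stronger (no automorphism can mix the folded direction with the hypercube directions at all), but it imports two external structure theorems and makes the proposition depend on the primality of $FQ_{n-k}$, which in the paper is proved only \emph{after} this proposition, so within the paper's ordering that theorem would have to be moved forward or cited ahead; the paper's invariant argument is more ad hoc but needs nothing beyond what has already been established. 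Two cosmetic points: the transpositions $f_{ij}$ with $n-k+1\leq i<j\leq n$ preserve the set of complementary-type edges of $Q_{n,k}$ setwise rather than fixing each such edge, and the automorphism-group description you use is customarily credited to Imrich and Miller, with Sabidussi--Vizing supplying the unique prime factorization it rests on.
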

\begin{proof}
Since $Q_{n, k}=FQ_{n-k}\Box Q_k$, by adapting the notations in Eq. (\ref{map}) and by the proof of Theorem \ref{cartesian  iff} we know that
all the nice perfect matchings of $Q_{n, k}$ are divided into two classes $\mathcal{M}'$ and $\mathcal{M}''$, where $\mathcal{M}'=\{\rho(M): M$ is a nice perfect matching of $FQ_{n-k}\}$ and $\mathcal{M}''=\{\sigma(M): M$ is a nice perfect matching of $Q_k\}$.

For $M_1'$, $M_2'\in\mathcal{M}'$, there are two nice perfect matchings $M_1$ and $M_2$ of $FQ_{n-k}$ such that $M_i'=\rho(M_i)$, $i=1, 2$.
By Proposition \ref{FQ_n_equ_nicepm}, there exists an automorphism $\varphi$ of $FQ_{n-k}$ such that $\varphi(M_1)=M_2$. Let $\varphi'(x, u):=(\varphi(x), u)$ for each vertex $(x, u)$ of $FQ_{n-k}\Box Q_k$.
It is easy to check that $\varphi'$ is an automorphism of $Q_{n, k}$ and $\varphi'(M_1')=M_2'$. By the arbitrariness of $M_1'$ and $M_2'$, we know that all the nice perfect matchings in $\mathcal{M}'$ are equivalent.
Similarly, we can show that all the nice perfect matchings in $\mathcal{M}''$ are equivalent.
\begin{figure}[htbp!]
\centering
\includegraphics[height=3.4cm]{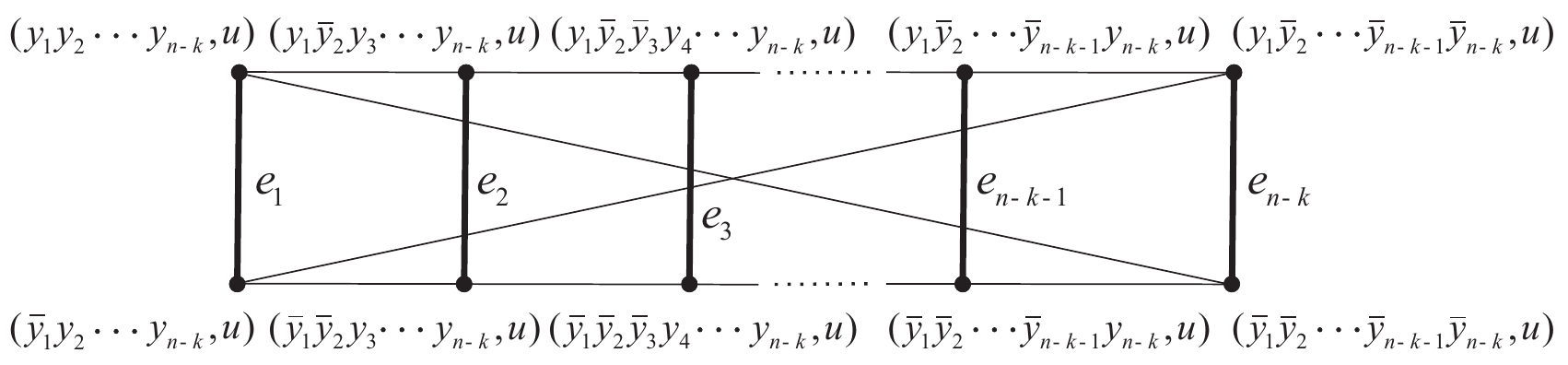}
\caption{\label{Fig.7}{\small The graph $H$.}}
\end{figure}

Let $F_1$ and $E_1$ be the sets of all the $1$-edges of $FQ_{n-k}$ and $Q_k$ respectively. Then $F_1$ is a nice perfect matching of $FQ_{n-k}$ and $E_1$ is a nice perfect matching of $Q_k$. So $\rho(F_1)\in \mathcal{M}'$ and $\sigma(E_1)\in \mathcal{M}''$.
See Fig. \ref{Fig.7}, we choose a subset $S:=\{e_1, \ldots, e_{n-k}\}$ of $\rho(F_1)$. Then all the vertices incident with $S$ induce a subgraph $H$ as depicted in Fig. \ref{Fig.7}.
For any subset $R\subseteq \sigma(E_1)$ of size $n-k$, let $G$ be the subgraph of $Q_{n, k}$ induced by all the vertices incident with $R$.
We note that $Q_{n,k}-\sigma(E_1)$ has exactly two components $A$ and $B$ each of which is isomorphic to $FQ_{n-k}\Box Q_{k-1}$, and $\sigma(E_1)=E(A, B)$.
So $G-R$ has at least two components. Clearly $H-S$ is connected. So for any automorphism  $\psi$ of $Q_{n, k}$, $\psi(S)\neq R$.
By the arbitrariness of $R$ we know that $\rho(F_1)$ and $\sigma(E_1)$ are not equivalent. Then we are done.
\end{proof}

From Corollary \ref{cartesian corollary} it is helpful to give a Cartesian decomposition of an extremal graph. It is known that  $Q_n\cong K_2\Box\cdots \Box K_2$ and $Q_{n,k}\cong FQ_{n-k}\Box Q_k$. However we shall see  surprisedly  that $FQ_n$ is undecomposable.

A nontrivial graph $G$ is said to be \emph{prime} with respect to the Cartesian product if whenever $G\cong H\Box R$, one factor is isomorphic to the complete graph $K_1$ and the other is isomorphic to $G$. Clearly, for $m\geq3$ and $n\geq2$, $K_{m, m}$ and $K_{2n}$ are prime extremal graphs. In the sequel, we show that $FQ_n$ is a prime extremal graph, too.

Recall that the length of a shortest path between two vertices $x$ and $y$ of $G$ is called the distance between $x$ and $y$, denoted by $d_G(x, y)$.
Let $G$ be a connected graph. Two edges $e=xy$ and $f=uv$ are in the relation $\Theta_G$ if
$d_G(x, u)+d_G(y, v)\neq d_G(x, v)+d_G(y, u).$
Notice that $\Theta_G$ is reflexive and symmetric, but need not to be transitive. We denote its transitive closure by $\Theta_G^{\ast}$. For an even cycle $C_{2n}$, $\Theta_{C_{2n}}$ consists of all pairs of antipodal edges. Hence, $\Theta_{C_{2n}}^{\ast}$ has $n$ equivalence classes and $\Theta_{C_{2n}}=\Theta_{C_{2n}}^{\ast}$. For an odd cycle $C$, any edge of $C$ is in relation $\Theta$ with its two antipodal edges. So all edges of $C$ belong to an equivalence class with respect to $\Theta^{\ast}_C$.
By the Cartesian product decomposition Algorithm depicted in \cite{WS}, we have the following lemma.
\begin{lem}\label{cartesian product decomposition algorithm}
If all the edges of a graph $G$ belong to an equivalence class with respect to $\Theta_G^{\ast}$, then $G$ is a prime graph under the Cartesian product.
\end{lem}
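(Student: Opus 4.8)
The plan is to prove the contrapositive: if $G$ is not prime, then its edge set splits into at least two $\Theta_G^{\ast}$-classes. So I would suppose $G\cong H\Box R$ with both $H$ and $R$ nontrivial. Since $G$ is connected both factors are connected, and a nontrivial connected graph has at least one edge, so both $H$ and $R$ contribute edges. Every edge of $H\Box R$ changes exactly one coordinate, so $E(G)$ partitions into the \emph{$H$-edges} (those of the form $(x_1,y)(x_2,y)$ with $x_1x_2\in E(H)$) and the \emph{$R$-edges} (those of the form $(z,w_1)(z,w_2)$ with $w_1w_2\in E(R)$), and both parts are nonempty. The goal is then to show that no $H$-edge is $\Theta_G^{\ast}$-related to any $R$-edge; this immediately yields at least two $\Theta_G^{\ast}$-classes, contradicting the hypothesis that all edges of $G$ lie in a single class.

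The key step is the distance formula for the Cartesian product, $d_G((p,q),(p',q'))=d_H(p,p')+d_R(q,q')$. First I would use this to check that $\Theta_G$ never directly relates an $H$-edge $e=(x_1,y)(x_2,y)$ to an $R$-edge $f=(z,w_1)(z,w_2)$. Writing $\alpha=(x_1,y)$, $\beta=(x_2,y)$, $\gamma=(z,w_1)$, $\delta=(z,w_2)$, additivity of $d_G$ gives
\begin{equation}
d_G(\alpha,\gamma)+d_G(\beta,\delta)=d_G(\alpha,\delta)+d_G(\beta,\gamma),
\end{equation}
since both sides reduce to $d_H(x_1,z)+d_H(x_2,z)+d_R(y,w_1)+d_R(y,w_2)$. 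Thus the defining inequality of $\Theta_G$ fails, so $e$ and $f$ are not in relation $\Theta_G$. In other words, $\Theta_G$ relates only edges of the same type (both $H$-edges or both $R$-edges).

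It then remains to pass from the relation $\Theta_G$ to its transitive closure, and this is where the only real subtlety lies. The point is that because a single application of $\Theta_G$ can never cross between the two edge-types, no finite $\Theta_G$-chain can either; hence $\Theta_G^{\ast}$ also respects the partition of $E(G)$ into $H$-edges and $R$-edges. Since both classes are nonempty, $E(G)$ has at least two $\Theta_G^{\ast}$-classes, which contradicts the assumption and completes the contrapositive. I expect the verification of the distance identity together with this transitive-closure observation to be the heart of the argument; everything else is bookkeeping. Alternatively, this lemma is precisely the fact underlying the correctness of the Cartesian decomposition algorithm of \cite{WS}, whose prime factors are recovered from the $\Theta_G^{\ast}$-classes, so one may simply invoke that algorithm; the computation above supplies a short self-contained justification of exactly the direction needed here.
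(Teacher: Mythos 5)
Your proof is correct, but it takes a genuinely different route from the paper, which offers no argument at all: the lemma is stated there as an immediate consequence of the correctness of the Cartesian product decomposition algorithm cited from \cite{WS}. You instead prove the contrapositive from scratch, and every step checks out: the distance additivity $d_G((p,q),(p',q'))=d_H(p,p')+d_R(q,q')$ forces the two sums in the definition of $\Theta_G$ to coincide for an $H$-edge against an $R$-edge (both equal $d_H(x_1,z)+d_H(x_2,z)+d_R(y,w_1)+d_R(y,w_2)$), so $\Theta_G$ never crosses the edge-type partition, and since transitive closure cannot create a crossing that no single step makes, $\Theta_G^{\ast}$ respects the partition as well; nontriviality and connectedness of both factors make both parts nonempty, giving at least two $\Theta_G^{\ast}$-classes. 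What your route buys is self-containment and precision: it isolates exactly the one direction of the factorization theory that the lemma needs (each $\Theta^{\ast}$-class lies inside the edge set of a single factor), rather than invoking the full algorithm, whose correctness in general requires more than $\Theta^{\ast}$ alone. What the paper's route buys is brevity and an explicit pointer to the standard machinery. One small point worth making explicit: the lemma tacitly assumes $G$ connected (otherwise distances, and hence $\Theta_G$, are not well defined); this is consistent with the paper, which defines $\Theta_G$ only for connected graphs, and your appeal to ``$G$ connected implies both factors connected'' silently uses this standing hypothesis, so you should state it rather than derive it.
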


The \emph{Hamming distance} between two vertices $x$ and $y$ in $Q_n$ is the number of different bits in the strings of both vertices, denoted by $H_{Q_n}(x, y)$ .
\begin{thm}[\cite{XUJUNM}]\label{property of FQ_n}
For a folded hypercube $FQ_n$, we have

$(1)$ $FQ_n$ is a bipartite graph if and only if $n$ is odd.

$(2)$ The length of any cycle in $FQ_n$ that contains exactly one complementary edge is at least $n+1$. If $n$ is even, then the length of a shortest odd cycle in $FQ_n$ is $n+1$.

$(3)$ Let $u$ and $v$ be two vertices in $FQ_n$. If $H_{Q_n}(u, v)\leq\lfloor\frac{n}{2}\rfloor$, then any shortest $uv$-path
in $FQ_n$ contains no complementary edges. If $H_{Q_n}(u, v)>\lceil\frac{n}{2}\rceil$, then any shortest $uv$-path in $FQ_n$ contains exactly one complementary edge.
\end{thm}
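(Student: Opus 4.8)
The plan is to analyze all walks in $FQ_n$ through the coordinates they flip. I would assign to each edge a flip-vector in $\{0,1\}^n$: an $i$-edge gets the unit vector $\mathbf{e}_i$, and a complementary edge gets the all-ones vector $\mathbf{1}$. Traversing a walk from a vertex $s$ to a vertex $t$, the coordinate-wise sum modulo $2$ of the flip-vectors of its edges equals $s\oplus t$. Consequently, if a walk from $u$ to $v$ uses $c$ complementary edges and, for each $i$, exactly $a_i$ $i$-edges, then $a_i+c\equiv d_i\pmod 2$ for every coordinate $i$, where $d_i=1$ precisely when $u$ and $v$ differ in coordinate $i$, and the walk has length $c+\sum_{i=1}^{n}a_i$. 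This one bookkeeping identity drives all three parts.

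For (1), I would $2$-color the vertices by the parity of their Hamming weight. Every $i$-edge joins vertices of opposite parity, while a complementary edge $x\bar{x}$ changes the weight by $n-2\,\mathrm{wt}(x)$, which is odd exactly when $n$ is odd. Hence when $n$ is odd the coloring is proper and $FQ_n$ is bipartite; when $n$ is even the edge $x\bar{x}$ together with a shortest hypercube path of length $n$ from $x$ to $\bar{x}$ forms an odd cycle of length $n+1$, so $FQ_n$ is not bipartite.

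For (2), applying the identity to a closed walk (so $u=v$ and every $d_i=0$) forces $a_i\equiv c\pmod 2$ for all $i$, whence the length $c+\sum_i a_i\equiv c(n+1)\pmod 2$. If $c=1$ then every $a_i$ is odd, so $\sum_i a_i\geq n$ and the length is at least $n+1$, which proves the first assertion for all $n$. When $n$ is even, an odd cycle needs $c$ odd; the case $c=1$ gives length $\geq n+1$ and $c\geq3$ gives length $\geq n+3$, so the cycle exhibited in (1) is shortest and the minimum odd length is exactly $n+1$.

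For (3), the same identity yields $\sum_i a_i\geq h$ when $c$ is even and $\sum_i a_i\geq n-h$ when $c$ is odd, where $h:=H_{Q_n}(u,v)=\sum_i d_i$. Thus any $uv$-walk has length at least $h$ if $c$ is even and at least $n+1-h$ if $c$ is odd, and both bounds are attained, giving the distance formula $d_{FQ_n}(u,v)=\min\{h,\,n+1-h\}$. When $h\leq\lfloor n/2\rfloor$ we have $h<n+1-h$, so a shortest path must lie on the even branch with $c=0$, using no complementary edge; when $h>\lceil n/2\rceil$ we have $n+1-h<h$, so a shortest path lies on the odd branch, and since $c\geq3$ is strictly too long it uses exactly one complementary edge. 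I expect the main obstacle to be the careful verification that paths (and cycles) with two or more complementary edges are never shortest, that is, pinning down that raising $c$ by $2$ strictly increases the forced length; this is exactly what the parity estimates $\sum_i a_i\geq h$ for even $c$ and $\sum_i a_i\geq n-h$ for odd $c$ are designed to guarantee, so the whole argument reduces to tracking these two inequalities.
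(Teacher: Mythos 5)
Your proposal is correct, but note that the paper does not prove this theorem at all: it is quoted from the reference (Xu et al.) as a known property of folded hypercubes, so there is no internal proof to compare against. Your flip-vector bookkeeping over $\mathbb{F}_2^n$ --- assigning $\mathbf{e}_i$ to $i$-edges and $\mathbf{1}$ to complementary edges, so that a $uv$-walk with $c$ complementary edges and $a_i$ $i$-edges satisfies $a_i+c\equiv d_i\pmod 2$ --- is a clean, self-contained route (essentially exploiting that $FQ_n$ is a Cayley graph of $\mathbb{Z}_2^n$ with connection set $\{\mathbf{e}_1,\dots,\mathbf{e}_n,\mathbf{1}\}$), and all three parts check out: the parity coloring in (1) is proper exactly when $n$ is odd since a complementary edge changes Hamming weight by $n-2\,\mathrm{wt}(x)$; in (2) the closed-walk congruence $a_i\equiv c$ gives $\sum_i a_i\geq n$ whenever $c$ is odd, yielding both the $c=1$ bound $n+1$ and, for even $n$, the exclusion of $c\geq 3$ via length $\geq n+3$; and in (3) the two branch inequalities $\sum_i a_i\geq h$ (even $c$) and $\sum_i a_i\geq n-h$ (odd $c$) give the distance formula $d_{FQ_n}(u,v)=\min\{h,\,n+1-h\}$, from which $c=0$ is forced when $h\leq\lfloor n/2\rfloor$ and $c=1$ when $h>\lceil n/2\rceil$, since raising $c$ by $2$ within a parity class strictly increases the lower bound. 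In fact your argument proves slightly more than the quoted statement (the exact distance formula), and it covers the boundary cases correctly, including leaving untouched the tie $h=(n+1)/2$ for odd $n$, which the theorem also excludes. The only caveat worth flagging is the degenerate case $n=1$, where the complementary edge duplicates the unique hypercube edge and $FQ_1$ is either a multigraph or $K_2$ depending on convention; since the paper only uses the theorem for $n\geq 4$, this is immaterial.
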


Here we list some known properties of $Q_n$ that will be used in the sequel. For any two vertices $x$ and $y$ in $Q_n$, $d_{Q_n}(x, y)=H_{Q_n}(x, y)$.
For any shortest path $P$ from $x_1x_2\cdots x_n$ to $\bar{x}_1\bar{x}_2\cdots\bar{x}_n$ in $Q_n$, $|E(P)\cap E_i|=1$ for each $i=1, 2, \ldots, n$.
For any integer $j$ ($1\leq j\leq n$), there is a shortest path $P$ from $x_1x_2\cdots x_n$ to $\bar{x}_1\bar{x}_2\cdots\bar{x}_n$ in $Q_n$ such that the edge in $E(P)\cap E_i$ is the $j$th edge when traverse $P$ from $x_1x_2\cdots x_n$ to $\bar{x}_1\bar{x}_2\cdots\bar{x}_n$.

For every subgraph $F$ of a graph $G$, the inequality $d_F(u, v)\geq d_G(u, v)$ obviously holds. If $d_F(u, v)= d_G(u, v)$ for all $u, v\in V(F)$, we say $F$ is an \emph{isometric subgraph} of $G$.
\begin{prop}[\cite{Handbook}]\label{isometric}
Let $C$ be a shortest cycle of $G$. Then $C$
is isometric in $G$.
\end{prop}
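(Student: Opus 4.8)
The plan is to fix two vertices $u,v$ on a shortest cycle $C$ and prove that $d_G(u,v)=d_C(u,v)$. Since $C$ is a subgraph of $G$, one always has $d_G(u,v)\le d_C(u,v)$, so the whole content lies in the reverse inequality, which I would establish by contradiction using the minimality of the length of $C$.

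First I would set up notation. Write $g:=|E(C)|$; as $C$ is a shortest cycle, $g$ is the girth of $G$, and no cycle of $G$ is shorter than $g$. For distinct $u,v\in V(C)$, the cycle $C$ decomposes into two arcs $P_1,P_2$ joining $u$ and $v$, with lengths $a:=|E(P_1)|\le|E(P_2)|=:b$ and $a+b=g$, so that $d_C(u,v)=a$. Suppose toward a contradiction that $d_G(u,v)<a$, and let $P$ be a shortest $u$-$v$ path in $G$, so that $|E(P)|=d_G(u,v)<a\le b$.

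The heart of the argument is to extract from $P$ and the long arc $P_2$ a cycle strictly shorter than $g$. The delicate point, and the step I expect to be the main obstacle, is that $P$ need not be internally disjoint from $C$, so $P\cup P_2$ is in general only a closed walk, not a cycle, and a naive length count need not produce a genuine short cycle. To circumvent this I would pass to the symmetric difference $P\bigtriangleup P_2$ of the two edge sets. Because $P$ and $P_2$ are both $u$-$v$ paths, every vertex has even degree in $P\bigtriangleup P_2$; moreover $P\neq P_2$ since their lengths differ, so $P\bigtriangleup P_2$ is a nonempty subgraph all of whose degrees are even, and therefore it contains a cycle $C'$. Then $|E(C')|\le|E(P\bigtriangleup P_2)|\le|E(P)|+|E(P_2)|=d_G(u,v)+b<a+b=g$, so $C'$ is a cycle of $G$ shorter than $C$, contradicting the choice of $C$ as a shortest cycle. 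Hence $d_G(u,v)=a=d_C(u,v)$, and since $u,v$ were arbitrary, $C$ is isometric in $G$.

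Apart from the even-subgraph extraction, the only other thing I would verify is the bookkeeping on which arc to use: pairing $P$ with the longer arc $P_2$ is exactly what forces $|E(C')|<g$ directly from the assumption $d_G(u,v)<a$. Using the shorter arc $P_1$ would instead give the bound $|E(C')|\le d_G(u,v)+a<2a\le g$, which also works but relies on $a\le b$ rather than on the hypothesis in a single clean step, so I would present the version with $P_2$.
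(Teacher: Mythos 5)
The paper offers no proof of its own here: Proposition \ref{isometric} is imported from \cite{Handbook} and stated without argument, so there is no internal proof to compare yours against, and your proposal must be judged on its own terms. On those terms it is correct and complete. You rightly identify the one genuine subtlety---that a shortest $u$-$v$ path $P$ need not be internally disjoint from $C$, so $P\cup P_2$ is in general only a closed walk---and the symmetric-difference device disposes of it cleanly: $P\bigtriangleup P_2$ is nonempty (since $|E(P)|<a\le b=|E(P_2)|$, the two edge sets differ), every vertex has even degree in it (parity at $u$ and $v$ is $1+1$, elsewhere $0$ or $2$ in each path), hence it contains a cycle $C'$ of $G$, and $|E(C')|\le|E(P)|+|E(P_2)|=d_G(u,v)+b<a+b=g$ contradicts the choice of $C$ as a shortest cycle; since $G$ is simple, $C'$ has length at least $3$, so it is a genuine cycle and the contradiction with the girth is real. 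For comparison, the usual textbook route (as in the Handbook) attacks the same obstacle differently: among all pairs $u,v\in V(C)$ with $d_G(u,v)<d_C(u,v)$ one chooses a pair minimizing $d_G(u,v)$, shows that a shortest $u$-$v$ path then meets $C$ only in its endpoints (an internal vertex $w$ on $C$ would give $d_C(u,v)\le d_C(u,w)+d_C(w,v)=d_G(u,w)+d_G(w,v)=d_G(u,v)$, a contradiction), and closes the path with the shorter arc to obtain a cycle of length less than $2a\le g$. Your even-subgraph extraction trades that minimal-counterexample bookkeeping for the standard fact that a nonempty graph with all degrees even contains a cycle; both are sound, and your closing remark about which arc to pair with $P$ is also accurate.
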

\begin{thm}
$FQ_n$ is a prime graph under the Cartesian product.
\end{thm}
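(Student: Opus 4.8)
The plan is to invoke Lemma \ref{cartesian product decomposition algorithm}: it suffices to prove that all edges of $FQ_n$ lie in a single equivalence class of $\Theta_{FQ_n}^\ast$. Since $FQ_2\cong K_4$ and $FQ_3\cong K_{4,4}$ are already known to be prime (complete and complete bipartite graphs), I would restrict attention to $n\ge 4$, although the argument below works for every $n\ge 3$. Throughout I write the edge classes of the underlying hypercube as $E_1,\dots,E_n$ (the $i$-edges) and let $E_{n+1}$ be the set of complementary edges, as in the proof of Theorem \ref{nicepm_FQ_n}; recall from Theorem \ref{property of FQ_n}(3) that $d_{FQ_n}(u,v)=\min\{H_{Q_n}(u,v),\,n+1-H_{Q_n}(u,v)\}$.

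First I would merge each $E_i$ into one $\Theta^\ast$-class. For $n\ge 3$ the graph $FQ_n$ is triangle-free (a complementary edge has no common neighbour with its endpoints when $n\ge 3$), so it has girth $4$ and every hypercube $4$-cycle (square) is a shortest cycle, hence isometric in $FQ_n$ by Proposition \ref{isometric}. In such a square the two $i$-edges are antipodal and therefore $\Theta_{FQ_n}$-related, and since the squares connect all $i$-edges, each $E_i$ lies in a single $\Theta_{FQ_n}^\ast$-class.

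The core of the proof is a family of $(n+1)$-cycles. Fix a complementary edge $x\bar x$ and a shortest $x$–$\bar x$ path $P$ in $Q_n$; since $P$ flips all $n$ bits one at a time, it meets each $E_i$ in exactly one edge, and $C:=P+x\bar x$ is a cycle of length $n+1$ carrying one edge from every $E_i$ together with the complementary edge. Writing $v_0=x,\dots,v_n=\bar x$ along $P$, one has $H_{Q_n}(v_i,v_j)=|i-j|$, so $d_{FQ_n}(v_i,v_j)=\min\{|i-j|,\,n+1-|i-j|\}=d_C(v_i,v_j)$; thus $C$ is isometric and $\Theta_C$ is inherited by $\Theta_{FQ_n}$ on $E(C)$. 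Now I split on parity. If $n$ is even then $C$ is an odd cycle, all of whose edges form one $\Theta_C^\ast$-class, so by isometry the complementary edge and one representative of each $E_i$ lie in one $\Theta_{FQ_n}^\ast$-class. If $n$ is odd then $C$ is even and $\Theta_C$ pairs antipodal edges; the complementary edge $x\bar x$ is then antipodal in $C$ to a single hypercube edge, and by reordering the bit-flips along $P$ — using the fact that any coordinate direction can be realized as the $j$-th edge of an $x$–$\bar x$ geodesic of $Q_n$ — I can arrange that this antipodal edge lies in any prescribed $E_i$. Either way, combined with the previous paragraph, the set $B:=E_1\cup\dots\cup E_n\cup\{x\bar x\}$ lies in a single $\Theta_{FQ_n}^\ast$-class.

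It remains to absorb the other complementary edges into $B$. Each translation $\tau_a\colon x\mapsto x\oplus a$ is an automorphism of $FQ_n$ fixing every $E_i$ setwise and permuting the complementary edges transitively, and automorphisms preserve $\Theta_{FQ_n}^\ast$-classes. Hence $\tau_a(B)$ is a $\Theta_{FQ_n}^\ast$-class still containing $E_1$, so $\tau_a(B)=B$ and $\tau_a(x\bar x)\in B$; transitivity then forces every complementary edge into $B$. Therefore $B=E(FQ_n)$, all edges lie in one class, and $FQ_n$ is prime by Lemma \ref{cartesian product decomposition algorithm}. I expect the main obstacle to be the parity bookkeeping in the odd-$n$ case — verifying that the complementary edge is genuinely antipodal to a hypercube edge of each prescribed direction in some isometric $(n+1)$-cycle — together with the careful check that these $(n+1)$-cycles are isometric via the distance formula of Theorem \ref{property of FQ_n}(3).
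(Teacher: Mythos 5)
Your proposal is correct and takes essentially the same route as the paper's proof: the same reduction via Lemma \ref{cartesian product decomposition algorithm}, the same use of girth-$4$ squares to merge each $E_i$ into one $\Theta_{FQ_n}^{\ast}$-class, the same key $(n+1)$-cycle $C=P+x\bar{x}$ meeting every edge class, and the same parity split with the bit-reordering trick so that, for odd $n$, the edge antipodal to the complementary edge can be placed in any prescribed $E_i$. The only tactical deviations are sound: you certify that $C$ is isometric uniformly via the distance formula $d_{FQ_n}(u,v)=\min\{H_{Q_n}(u,v),\,n+1-H_{Q_n}(u,v)\}$ — which is true but is a short derivation from Theorem \ref{property of FQ_n} rather than its literal statement (the paper instead invokes the shortest-odd-cycle argument for even $n$ and a contradiction via Theorem \ref{property of FQ_n}(2) for odd $n$), and you absorb the remaining complementary edges by translation automorphisms preserving the $\Theta^{\ast}$-class of $E_1$, where the paper simply observes that $E_{n+1}$ already lies in a single class because two complementary edges over Hamming-adjacent base vertices are opposite edges of a $4$-cycle.
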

\begin{proof}
Clearly, $FQ_2$ and $FQ_3$ are prime. So we suppose that $n\geq4$. We recall that $E_i$ is the set of all the $i$-edges of $Q_n$, $i=1, 2,$ $\ldots$, $n$. Let $E_{n+1}$ be the set of all the complementary edges of $FQ_n$. Then $E_1$, $E_2$, $\ldots$, $E_{n+1}$ is a partition of $E(FQ_n)$. Since the girth of $FQ_n$ is $4$ for $n\geq4$, any two opposite edges of a $4$-cycle are in relation $\Theta_{FQ_n}$. So $E_i$ is contained in an equivalence class with respect to $\Theta_{FQ_n}^\ast$, $i=1, 2, \ldots, n+1$.
For any vertex $x_1x_2\cdots x_n$, it is linked to $\bar{x}_1\bar{x}_2\cdots\bar{x}_n$ by a complementary edge $e$ in $FQ_n$. Let $P$ be any shortest path from $x_1x_2\cdots x_n$ to $\bar{x}_1\bar{x}_2\cdots\bar{x}_n$ in $Q_n$. Then the length of $P$ is $n$ and $|P\cap E_i|=1$ for any $i=1, 2, \ldots, n$.
Set $C:=P\cup \{e\}$. Then $C$ is a cycle of length $n+1$.

If $n$ is even, then the length of any shortest odd cycle in $FQ_n$ is $n+1$ by Theorem \ref{property of FQ_n} $(2)$. So $C$ is a shortest odd cycle in $FQ_n$. By Proposition \ref{isometric}, $C$ is an isometric odd cycle in $FQ_n$.
So all edges of $C$ belong to an equivalence class with respect to $\Theta_{FQ_n}^\ast$. Since $E(C)\cap E_i\neq\emptyset$ for any $i=1, 2, \ldots, n+1$,  all edges of $E(FQ_n)=\bigcup\limits_{i=1}^{n+1}E_i$ belong to an equivalence class with respect to $\Theta_{FQ_n}^\ast$, that is, $FQ_n$ is a prime graph under the Cartesian product by Lemma \ref{cartesian product decomposition algorithm}.

For $n$ being odd, we first show that $C$ is an isometric cycle in $FQ_n$. It is sufficient to show that $d_C(u, v)=d_{FQ_n}(u, v)$ for any two distinct vertices $u$ and $v$ of $C$. By Theorem \ref{property of FQ_n} $(3)$, there are two cases for the shortest $uv$-path in $FQ_n$. If $H_{Q_n}(u, v)\leq\lfloor\frac{n}{2}\rfloor$, then any shortest $uv$-path
in $FQ_n$ contains no complementary edges. So $d_{FQ_n}(u, v)=d_{Q_n}(u, v)=H_{Q_n}(u, v)=d_C(u, v)$.
If $H_{Q_n}(u, v)>\lceil\frac{n}{2}\rceil$, then any shortest $uv$-path in $FQ_n$ contains exactly one complementary edge. Let $P_1$ be the $uv$-path on $C$ that contains the unique complementary edge $e$. Since $H_{Q_n}(u, v)>\lceil\frac{n}{2}\rceil$ and the length of $C$ is $n+1$, $d_C(u, v)=|P_1|=n+1-H_{Q_n}(u, v)<\lceil\frac{n}{2}\rceil$. Clearly $d_{FQ_n}(u, v)\leq d_C(u, v)$.
We suppose that $d_{FQ_n}(u, v)< d_C(u, v)$, that is, $P_1$ is not a shortest $uv$-path in $FQ_n$. Let $P_2$ be a shortest $uv$-path in $FQ_n$. Then $P_2$ contains exactly one complementary edge by Theorem \ref{property of FQ_n} $(3)$.
Set $P':=C-(V(P_1)\setminus\{u, v\})$. Then $P'\cup P_2$ is a walk in $FQ_n$ that has exactly one complementary edge. So there is a cycle $C'\subseteq P'\cup P_2$ that contains exactly one complementary edge. We can deduce a contradiction by Theorem \ref{property of FQ_n} $(2)$ as follows:
$$n+1\leq|C'|\leq|P'|+|P_2|<|P'|+|P_1|=|C|=n+1.$$
So $d_{FQ_n}(u, v)=d_C(u, v)$.

For any $i\in\{1, 2, \ldots, n\}$,
let $P^i$ be a shortest path from $x_1x_2\cdots x_n$ to $\bar{x}_1\bar{x}_2\cdots\bar{x}_n$ in $Q_n$ such that the unique edge in $P^i\cap E_i$ is the antipodal edge of $e$ on $C^i:=P^i\cup \{e\}$.
Since $C^i$ is an isometric even cycle by the above proof, the unique complementary edge $e$ on $C^i$ and its antipodal edge $P^i\cap E_i$ are in relation $\Theta_{FQ_n}$.  So $E_i$ and $E_{n+1}$ are contained in an equivalence class with respect to $\Theta_{FQ_n}^\ast$, $i=1, 2, \ldots, n$. Hence $FQ_n$ is a prime graph under the Cartesian product by Lemma \ref{cartesian product decomposition algorithm}.
\end{proof}

Now we know that for $m\geq3$ and $n\geq2$, $K_{m, m}$, $K_{2n}$  and $FQ_n$  are prime extremal graphs. From Proposition \ref{prime decomposition of G}, it is interesting to characterize all the prime extremal graphs.

\acknowledgements
We thank two anonymous reviewers for giving helpful suggestions and  comments to improve the manuscript.

\nocite{*}
\bibliographystyle{abbrvnat}
\bibliography{tight}
\end{document}